\documentclass{amsart}

\usepackage{amsmath, amssymb, amsthm}
\usepackage[mathcal]{eucal}
\usepackage{enumerate}
\usepackage{verbatim, hyperref}
\usepackage{comment}
\usepackage{xcolor}
\usepackage{appendix}

\makeatletter
\renewcommand{\l@subsection}{%
  \@tocline{2}{0pt}{2.5pc}{5pc}{}%
}
\makeatother

\newcommand{\sign}{\sigma}
\newcommand{\lil}{\varepsilon}

\newtheorem{theorem}{Theorem}
\newtheorem*{theorem*}{Theorem}
\newtheorem{lemma}{Lemma}
\newtheorem{question}{Question}
\newtheorem{proposition}{Proposition}

\newtheorem{oldtheorem}{Theorem}

\def\Z{\ensuremath{\mathbb Z}}
\def\C{\ensuremath{\mathbb C}}

\def\T{\ensuremath{\mathbb T}}
\def\D{\ensuremath{\mathbb D}}

\renewcommand{\epsilon}{\varepsilon}
\renewcommand{\phi}{\varphi}
\renewcommand{\bar}{\overline}

\def\Zchi{\ensuremath{\mathbf 1}}

\numberwithin{equation}{section}

\title[Quantitative Baxter's theorem]{Quantitative possibilities for Baxter's theorem \\in nonlinear Fourier analysis}

\author{Michel Alexis}
\address{Clemson University,
O-110 Martin Hall,
220 Parkway Drive,
Clemson, SC, USA, 29634}
\email{micalexis.math@gmail.com}

\author{Gevorg Mnatsakanyan}
\address{Yerevan State University, 1 Alex Manoogian st., 0025, Yerevan, Armenia}
\email{gevorg.mnatsakanyan@ysu.am}

\author{Kristina Oganesyan}
\address{Steklov Mathematical Institute of Russian Academy of Sciences, 8 Gubkina Str., 119333, Moscow, Russia}
\email{kristina.oganesyan@mi-ras.ru}

\date{October 2026}

\subjclass{68Q12,81P68,34L25,42C99}

\begin{document}

\begin{abstract}
 Baxter's classical theorem for orthogonal polynomials on the unit circle establishes that the linear Fourier coefficients of a measure are in $\ell^1$ if and only if the nonlinear coefficients, i.e., the so-called Verblunsky coefficients, are in $\ell^1$. However, this equivalence is purely qualitative. We explore possible formulations of such quantitative theorems with norm and Lipschitz estimates for the $SU(2)$-valued nonlinear Fourier transform, for which the analog of Baxter's theorem has been recently obtained. In particular, the highlight of this paper is a number-theoretic construction for the NLFT which proves some negative results in this direction. We also prove a positive result and discuss the limitations of Baxter's method.
\end{abstract}

\maketitle

\tableofcontents

\section{Introduction}

The nonlinear Fourier transform (NLFT) of  a finitely supported complex sequence $F=(F_n)_{n\in \Z}$ is defined as the matrix-valued function
\begin{equation}\label{def:NLFT}
G_F(z)=G(z) = \prod_{k=-\infty}^{\infty} \frac{1}{\sqrt{1-\sign |F_k|^2}} \begin{pmatrix}
    1 & F_k z^k \\
    \sigma \bar F_k z^{-k} & 1
\end{pmatrix},
\end{equation}
where $ \sign=\pm 1$ and $z$ is on the unit circle $\T$. Depending on whether $\sign$ equals $-1$ or $1$, each of the transfer matrices in \eqref{def:NLFT} and consequently their product $G_F$ belongs to the matrix groups 
\[
 SU(1,1) := \Big\{ \begin{pmatrix}
    \alpha & \beta \\ \overline{\beta} & \overline{\alpha}
\end{pmatrix}:  |\alpha|^2 - |\beta|^2 = 1\Big\} \, , \; SU(2):=\Big\{ \begin{pmatrix}
    \alpha & \beta \\ -\overline{\beta} & \overline{\alpha}
\end{pmatrix}:  |\alpha|^2 + |\beta|^2 = 1\Big\} \, , 
\]
respectively.
If $\sign = 1$, we require $|F_n|<1$ for all $n$, whereas when $\sign = -1$, there is no such restriction. Hence, we will call the corresponding matrices $G_F$ the $SU(1,1)$-NLFT or $SU(2)$-NLFT of $F$, depending on the choice of $\sign$. For $F$ in $\ell^p(\Z)$, $1\leq p\leq 2$, the NLFT can be defined as a limit of the partial products in the appropriate spaces. We refer to \cite{TaoThiele2012,tsai} for the general theory and to \cite{QSP_NLFA} for a recent survey for the $SU(2)$-NLFT. We denote the entries of the matrix $G_F$ by
$$
\begin{pmatrix}
    a_F (z) & b_F (z) \\
    \sign b_F^*(z) & a_F^* (z)
\end{pmatrix} := G_F (z)  \, ,
$$
and will shorthand the matrix by its first row $(a_F,b_F)=(a,b)$. Here and below, $f^*$ denotes the holomorphic reflection of a function $f$ across the unit circle $\T$, i.e. $f^*(z) := \overline{f(\bar z^{-1})}$. One can show \cite[Theorem 3]{QSP_NLFA} that $a^*$ is analytic on $\D$, and hence $a$ is analytic on the disk at infinity $\D^* := \{ z \in \C ~:~ |z| > 1\} \cup \{\infty\}$. Let us also denote by $A_1$ the Wiener algebra of functions on the unit circle with Fourier coefficients in $\ell^1$ with the corresponding norm.

The $SU(1,1)$-NLFT is intimately connected to orthogonal polynomials on the unit circle \cite{simon}, which is an area with a long history, filled with many beautiful and classical results. One such results is Baxter's theorem, which establishes a qualitative $\ell^1$ mapping property. In the language of the NLFT it reads as follows.
\begin{oldtheorem}[A variant of Baxter's theorem \cite{baxter}]
Let $F\in \ell^2(\Z_+)$ and $(a_F,b_F)$ be its $SU(1,1)$-NLFT. Then, $F\in \ell^1(\Z_+)$ if and only if $a_F,b_F \in A_1$.
\end{oldtheorem}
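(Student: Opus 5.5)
Both directions will be handled in the Wiener algebra $A_1$, which is a Banach algebra under pointwise multiplication with $\|fg\|_{A_1}\le \|f\|_{A_1}\|g\|_{A_1}$ and is closed under inversion of nonvanishing elements (Wiener's lemma).

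\emph{Direction $F\in\ell^1\Rightarrow a_F,b_F\in A_1$.} Each transfer matrix $T_k(z)=(1-|F_k|^2)^{-1/2}\begin{pmatrix}1&F_kz^k\\ \bar F_k z^{-k}&1\end{pmatrix}$ has entries in $A_1$. In the matrix $A_1$-norm (maximum row sum of entry norms) its norm is $(1-|F_k|^2)^{-1/2}(1+|F_k|)=\sqrt{(1+|F_k|)/(1-|F_k|)}$, which is at most $1+C|F_k|$ for $|F_k|\le 1/2$. Hence the partial products satisfy the uniform bound
\[\Big\|\prod_{|k|\le N}T_k\Big\|\le \prod_k \sqrt{\tfrac{1+|F_k|}{1-|F_k|}}<\infty.\]
The differences of consecutive partial products telescope with bounds $\lesssim |F_{N+1}|$, so the products are Cauchy in $A_1$. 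The $A_1$-limit agrees with the $\ell^2$-NLFT, since both are limits in $L^2$, and therefore $a_F,b_F\in A_1$.

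\emph{Direction $a_F,b_F\in A_1\Rightarrow F\in\ell^1$.} This is the hard direction, and I would follow Baxter's original strategy translated to the NLFT. First, $a^*$ is outer and nonvanishing on $\bar{\D}$, because $|a|^2=1+|b|^2\ge1$ on $\T$. Wiener's lemma gives $1/a^*\in A_1$, hence $b/a\in A_1$. The quotient $b/a$ plays the role of the Schur/Carathéodory data, so the measure $d\mu=|a|^{-2}\,dm$ (up to normalization) has density $w=1/|a|^2$ in $A_1$ with $w>0$. Baxter's theorem proper then says that the Verblunsky coefficients, which are $F_n$ up to conjugation or sign, are summable.

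To prove that last step, I would use the Szeg\H{o} recursion. The truncated sequences $F_{\ge n}$ have NLFT data $(a_n,b_n)$ obtained by peeling off transfer matrices, and $F_n$ is read off as a coefficient of $b_n/a_n^*$. The point is that this map cannot be controlled naively in $A_1$, since the norms could grow. Baxter's trick is to use a weighted Toeplitz-operator argument instead. The Toeplitz operator $T_w$ with symbol $w\in A_1$, $w>0$, is invertible on $\ell^1(\Z_+)$; this uses the Wiener--Hopf factorization $w=|a^*|^{-2}$ with $1/a^*$ and $a^*$ in $A_1^+$. The finite sections $T_w^{(n)}$ are then uniformly invertible on $\ell^1$ for large $n$, by the Baxter--Gohberg--Feldman finite section argument.

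The orthogonal polynomials $\phi_n^*$ solve $T_w^{(n)}x=e_0$. Uniform invertibility yields convergence $\phi_n^*\to 1/\overline{\text{outer}}$ in $\ell^1$ with summable error tails, and from $F_n=-\overline{\phi_{n+1}(0)}$ one deduces $\sum|F_n|<\infty$.

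The main obstacle is the uniform $\ell^1$-invertibility of the finite sections. That step is genuinely nonquantitative, since the bounds depend on the $A_1$ modulus of continuity of $w$ and not only on its norm, and it is where I would spend the effort, via a compactness/perturbation argument.
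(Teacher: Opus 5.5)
Your forward direction is fine, and using uniform $\ell^1$-invertibility of Toeplitz finite sections to get convergence of the partial data is a legitimate classical route. The hard direction, however, has two problems, and the second is a genuine missing idea. First, $|a|^{-2}\,dm$ is not the measure whose Verblunsky coefficients are the $F_n$. Take $F$ with a single nonzero entry $F_1$. Then $a=(1-|F_1|^2)^{-1/2}$ is constant, so $|a|^{-2}dm$ has all Verblunsky coefficients zero. The correct measure is the Bernstein--Szeg\H{o} weight $(1-|F_1|^2)|1+F_1z|^{-2}$. Multiplying $G_n$ on the left by the row $(1,1)$ shows that $a_n^*+b_n$ obeys the Szeg\H{o} recursion with $\alpha_n=-F_{n+1}$, so the right weight is $w=|a^*+b|^{-2}=(1-|b/a^*|^2)/|1+b/a^*|^2$. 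This is repairable. The function $a^*+b$ lies in $A_1$ with Fourier support in $\Z_+$, and it is zero-free on $\bar\D$ because $|b/a^*|<1$ there, so Wiener's lemma gives your factorization with $a^*+b$ in place of $a^*$. Running the same argument for $|a^*-b|^{-2}$ (second-kind polynomials) then gives $a_n\to a$ and $b_n\to b$ in $A_1$. Incidentally, outerness of $a^*$ comes from the $\ell^2(\Z_+)$ theory, not from $|a|\ge 1$ on $\T$.

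Second, and more seriously, the final step does not follow. Uniform invertibility yields a bound of the type $\|\varphi_n^*-D^{-1}\|_{A_1}\lesssim\sum_{k>n}|\widehat{D^{-1}}(k)|$, i.e.\ convergence with only a tail-type rate. The quantity you need, $\overline{\varphi_{n+1}(0)}$, is the coefficient of $z^{n+1}$ in $\varphi^*_{n+1}$. Bounding it by $|\widehat{D^{-1}}(n+1)|+\|\varphi_{n+1}^*-D^{-1}\|_{A_1}$ and summing over $n$ produces the weighted norm $\sum_k k|\widehat{D^{-1}}(k)|$, which $D^{-1}\in A_1$ does not control. So ``summable error tails'' is precisely the unproved point. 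The paper's sketch of Baxter's argument supplies the missing mechanism: a telescoping identity with cancellation of the top coefficient. Set $A_n=a_n/a_n^*(0)$ and $B_n=b_n/a_n^*(0)$, so that $B_{n+1}=B_n+F_{n+1}z^{n+1}A_n$, and divide by the limit $A$. Note that $1/A\in A_1$ has Fourier support in $\Z_{\le 0}$. Then $B_n/A$ lives on frequencies $\le n$, while $B_{n+1}/A$ and $F_{n+1}z^{n+1}A_n/A$ both have coefficient exactly $F_{n+1}$ at frequency $n+1$, and these cancel. Hence
\[\|B_n/A\|_{A_1}\le\|B_{n+1}/A\|_{A_1}+|F_{n+1}|\big(\|A_n/A\|_{A_1}-2\big).\]
Since $A_n/A\to 1$ in $A_1$ (this is where your convergence result enters, with no rate needed), pick $n_0$ with $\|A_n/A\|_{A_1}<3/2$ for $n\ge n_0$. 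Telescoping then gives $\frac12\sum_{n>n_0}|F_n|\le\|b/a\|_{A_1}$, and the reflected potential handles the remaining finitely many terms. So the real obstacle is not the finite-section invertibility, which is classical and citable. It is converting mere convergence into summability, and your proposal has no device for that.
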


The second author has recently proved an analog for the $SU(2)$ setting.
\begin{oldtheorem}[\cite{SU2Baxter}]\label{oldthm:su2baxter}
Let $F \in \ell^2 (\Z)$ and $(a_F,b_F)$ be the $SU(2)$-NLFT of $F$. If $F\in \ell^1(\Z)$, then $a_F,b_F \in A_1$. Conversely, if $a_F$ is outer on $\D^*$ and $b_F/a_F^* \in A_1$, then $F\in \ell^1(\Z)$.
\end{oldtheorem}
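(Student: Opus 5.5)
For the forward direction, I would use the fact that the Wiener algebra $A_1$ is a Banach algebra under pointwise multiplication; since it contains the monomials $z^k$, it contains all matrix-valued trigonometric polynomials. Each factor
\[
\frac{1}{\sqrt{1+|F_k|^2}}\begin{pmatrix}1 & F_k z^k\\ -\bar F_k z^{-k} & 1\end{pmatrix}
\]
has entries in $A_1$. Measuring a matrix by the maximum row sum of the $A_1$-norms of its entries, the factor has norm at most $(1+|F_k|)/\sqrt{1+|F_k|^2}\le 1+|F_k|$. Hence the partial products $G_N=\prod_{|k|\le N}$ satisfy $\|G_N\|\le \prod_k(1+|F_k|)\le e^{\|F\|_1}$. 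The telescoping bound $\|G_N-G_M\|\le \|G_M\|\cdot\bigl(\prod_{M<|k|\le N}(1+|F_k|)-1\bigr)$ shows the sequence is Cauchy in $A_1$. The $A_1$ limit agrees with the $\ell^2$-defined NLFT, since $A_1$ convergence implies uniform convergence and hence $L^2$ convergence. This gives $a_F,b_F\in A_1$.

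For the converse, I would follow Baxter's strategy through a layer-stripping and Riemann--Hilbert description. Set $r=b/a^*$. The outer hypothesis on $a$ lets one recover $(a,b)$ from $r$: since $|a|^{-2}=1+|r|^2$ on $\T$ and $a$ is outer, $a$ is determined by $r$. Split $F$ at $0$ and write $G_F=G_{F_-}G_{F_+}$. I would then use the known characterization of the truncated data: for the half-line piece, $(a_+,b_+)$ solves a linear Riemann--Hilbert system in which the unknowns, analytic in $\D$ or $\D^*$, are tied together by the Hankel-type operator with symbol $r$. Concretely, one solves
\[
(I+H_r^*H_r)\,u=\text{(explicit data)},
\]
where $H_r$ is the Hankel operator $P_-\,(r\,\cdot)$ or a shifted version $P_{<n}(r z^{2n}\,\cdot)$ for the $n$-th layer. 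The Fourier coefficient $F_n$ is then read off as a specific coefficient of the solution, e.g. the leading coefficient ratio $b_n(0)/a_n^*(0)$ of the truncated NLFT.

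The key estimate is that the operators $I+H_n^*H_n$ are invertible on $A_1$-type weighted spaces uniformly in $n$. On $L^2$, invertibility is immediate with bound $1$ because $H^*H\ge0$. On $A_1$, I would use that $H_n$ for large $n$ involves only the tail $\sum_{|m|\ge n}|\hat r(m)|$, which tends to $0$ since $r\in A_1$. Thus, for $n\ge N$, a Neumann series gives $\|(I+H_n^*H_n)^{-1}\|_{A_1\to A_1}\le 2$, and the $F_n$ are bounded by the tails of $\hat r$ in the summed form $\sum_n |F_n|\lesssim \sum_n\sum_{m\ge n}|\hat r(m)|\cdot(\ldots)$. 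This crude bound does not give summability, however. To get $\ell^1$, I would instead show that the solution $u_n$ is close in $A_1$ to the first-order approximation, so that $F_n=\hat r(n)+O\bigl(\text{quadratic in the tail}\bigr)$ with a summable error. Finitely many $n<N$ contribute trivially.

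The main obstacle is this uniform $A_1$ invertibility for small $n$, before the tails are small. Here I would use Baxter's compactness argument. $H_r$ is compact on $A_1$ because $r\in A_1$ is approximable by trigonometric polynomials, so $I+H^*H$ is Fredholm of index $0$ on $A_1$. Injectivity on $A_1$ follows from injectivity on $L^2\supset A_1$, which is where $H^*H\ge0$ is used, and this yields invertibility. The outer hypothesis on $a$ is needed precisely to ensure that no extra kernel from inner factors appears, i.e.\ that the Riemann--Hilbert solution really reproduces $(a,b)$ and not a modification by a singular inner function.
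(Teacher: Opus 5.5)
Your forward direction is fine: it is the same Banach-algebra product bound the paper quotes in \eqref{eq:norm bd a+b F}. The only slip is that each factor minus the identity has norm $\le 2|F_k|$ rather than $\le |F_k|$, which is harmless.

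The converse has a genuine gap, at exactly the step that is supposed to produce $\ell^1$. You correctly note that bounding $F_n$ by the tail $\tau_n=\sum_{m\ge n}|\hat r(m)|$ is not summable. Your remedy has the same defect: showing $u_n$ is $A_1$-close to its first-order approximation, so that $F_n=\hat r(n)+O(\text{quadratic in the tail})$, is still an estimate made for each $n$ separately. Bounds such as $\tau_n^2$ or $\tau_n\|u_n-u_n^{(1)}\|_{A_1}\lesssim\tau_n^3$ need not be summable; take $\hat r(m)=1/(m\log^2 m)$, so that $\tau_n\approx 1/\log n$. What is needed is a mechanism that sums over $n$.

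In the paper (following \cite{SU2Baxter}) that mechanism is Baxter's trick. Once $A_n\to A$ in $A_1$, the cancellation at frequency $n+1$ in $-B_n/A=-B_{n+1}/A+F_{n+1}z^{n+1}A_n/A$ telescopes to $\tfrac12\sum_{n\ge n_0}|F_{n+1}|\le\|b/a\|_{A_1}$.

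Your Hankel route can be repaired, but only with a different setup and a summation you did not supply. For the right-tail data of $F\mathbf 1_{[n,\infty)}$, set $\alpha=a_{\ge n}/a_{\ge n}(\infty)$, $\beta=b_{\ge n}/a_{\ge n}(\infty)$ and $c=b/a$. From $G_{<n}=G\,G_{\ge n}^{-1}$ and analyticity of $1/a$ on $\D^*$ (this is where outerness enters), one gets $\beta=P_{\ge n}(c\alpha)$, $\alpha-1=-P_{<0}(\overline{c}\beta)$ and $F_n=\hat\beta(n)$. Both maps have $A_1$-norm at most $\tau_n:=\sum_{m\ge n}|\hat c(m)|$. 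For $\tau_n<1$ the Neumann series writes $F_n$ as a sum of forms $\sum\hat c(m_1)\overline{\hat c(m_2)}\hat c(m_3)\cdots\hat c(m_{2k+1})$, with all $m_i\ge n$ and $m_1-m_2+\dots+m_{2k+1}=n$. Each tuple therefore contributes to a single $n$, so summing over $n$ before taking absolute values gives $\sum_{n\ge N}|F_n|\le\tau_N/(1-\tau_N^2)$. That $c\in A_1$ follows from $r\in A_1$: $\log|a|=-\tfrac12\log(1+|r|^2)\in A_1$ by Wiener--L\'evy, and conjugation is bounded on $A_1$.

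Two further parts of your plan are off target.
\begin{enumerate}
\item With $r=b/a^*$ and the truncations $(a_{<n},b_{<n})$ as unknowns, the system involves $\hat r(m)$ for $m<n$, which never becomes small. It is a finite-section Toeplitz problem, and its uniform $A_1$-invertibility in $n$ is precisely the hard first part of Baxter's approach ($A_n\to A$ in $A_1$). A Fredholm argument at each fixed $n$ gives no uniformity.
\item The ``main obstacle'' you name is misplaced. Finitely many $n<N$ need nothing, whereas for two-sided $F$ the infinitely many $n\to-\infty$ are not controlled by right-tail Hankel operators, and invertibility for each $n$ cannot be summed. These are handled by reflection, as the paper does in the proof of Theorem \ref{prop:positive with a}. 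There $(a^*(z^{-1}),b(z^{-1}))$ is the NLFT of $(F_{-n})_{n\in\Z}$, its first entry is again outer, and the left tail of $F$ becomes a right tail governed by the negative-frequency coefficients of $b/a^*$.
\end{enumerate}
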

The forward direction in both theorems above is fairly easy and yields nice quantitative norm and Lipschitz bounds. For example, one can get both in $SU(1,1)$ and $SU(2)$ settings \cite[Section 5]{QSP_NLFA},
\begin{equation}\label{eq:norm bd a+b F}
    \|a\|_{A_1}+\|b\|_{A_1} \lesssim e^{\|F\|_{\ell^1}}\, ,
\end{equation}
where we write $f(x) \lesssim g(x)$ if there exists $C>0$ such that $f(x) \leq C g(x)$ for all $x$. Similarly $f(x,y) \lesssim_y g(x,y)$ means that for each $y$ there is $C_y >0$ such that $f(x,y) \leq C_y g(x,y)$ for all $x$. We also write $f\approx g$ whenever we have both $f\gtrsim g$ and $f\lesssim g$.

The interesting part of the proofs of Baxter's theorem is the opposite direction, in particular as they are very qualitative and yield little in the way of norm estimates.

From a different perspective, the $SU(2)$-NLFT is equivalent to generalized quantum signal processing (QSP), a quantum algorithm as in \cite{low2017}. This connection was first established in \cite{QSP_NLFA} if the sequence $F$ is purely imaginary and even, and then in general in \cite{Linlin2025, laneve}. From the QSP perspective, several $A_1 \to \ell^1$ norm and Lipschitz estimates in the vicinity of $0$ were obtained by \cite{Linlin} in the following form.

Henceforth, all NLFTs will be $SU(2)$-NLFTs unless otherwise stated.
\begin{oldtheorem}[A variant of estimates from \cite{Linlin}] \label{oldthm:linlin}
Let $F, F'\in \ell^2(\Z)$ be pure imaginary and even. There exists a constant $r_b<0.5$ such that if $\|b_F\|_{A_1}, \|b_{F'}\|_{A_1}<r_b-\eta$ for some $\eta>0$, then
\begin{equation}\label{eq:b_to_F_Lip}
    \|F-F'\|_{\ell^1} \lesssim_\eta \|b_F-b_{F'}\|_{A_1} \, .    
\end{equation}
\end{oldtheorem}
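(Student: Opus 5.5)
We would prove Theorem~\ref{oldthm:linlin} by a contraction-mapping argument that inverts the NLFT near $0$, viewing $F \mapsto b_F$ as a perturbation of the linear Fourier transform. For $F$ pure imaginary and even, $b_F$ has a convergent multilinear expansion
\[
b_F = \widehat{F} + \sum_{k\ge 1} T_{2k+1}(F,\dots,F),
\]
where $\widehat F(z)=\sum_n F_n z^n$ is the linear part and $T_{2k+1}$ is an odd-degree multilinear form. This expansion comes from expanding the product \eqref{def:NLFT} (with $\sign=-1$), using $1/\sqrt{1+|F_k|^2}$ and collecting terms. The same estimates that give \eqref{eq:norm bd a+b F} show that each term is controlled in $A_1$ by a power of $\|F\|_{\ell^1}$:
\[
\|T_{2k+1}(F_1,\dots,F_{2k+1})\|_{A_1}\le C^{2k+1}\prod_j\|F_j\|_{\ell^1}.
\]
The reason is that $A_1$ is a Banach algebra and the ordered-product structure only reduces the combinatorial count.

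Since $\widehat{\cdot}:\ell^1\to A_1$ is an isometric isomorphism, we write $F=\Phi(F):=\widecheck{b}-\sum_{k\ge1}\widecheck{T_{2k+1}(F,\dots,F)}$, with $b$ given. The key steps are as follows.

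\begin{enumerate}[(i)]
\item Show that $N(F):=b_F-\widehat F$ satisfies
\[
\|N(F)-N(F')\|_{A_1}\le \theta(r)\,\|F-F'\|_{\ell^1}\quad\text{for } \|F\|_{\ell^1},\|F'\|_{\ell^1}\le r,
\]
where $\theta(r)=\sum_k (2k+1)C^{2k+1}r^{2k}$ is the derivative bound of the majorant series. Then $\theta(r)<1$ for $r$ below some threshold $r_0$.

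\item Note that the pure-imaginary and even symmetry is preserved by $\Phi$. This holds because $b_F$ has the corresponding symmetry, and it guarantees that the unique fixed point is the actual sequence in this class.

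\item Deduce a priori that $\|F\|_{\ell^1}$ is small when $\|b_F\|_{A_1}$ is small. From
\[
\|F\|_{\ell^1}\le \|b_F\|_{A_1}+\sum_k C^{2k+1}\|F\|_{\ell^1}^{2k+1},
\]
the function $g(r)=r-\sum_k C^{2k+1}r^{2k+1}$ is increasing on $[0,r_0]$, so $g(\|F\|_{\ell^1})\le \|b_F\|_{A_1}$. This forces $\|F\|_{\ell^1}$ into the small branch $[0,r_0-\delta(\eta)]$, provided we know $\|F\|_{\ell^1}$ lies in that branch to begin with. We define $r_b:=\max_{[0,r_0]}g$.

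\item Subtract the identities for $F$ and $F'$ to get
\[
\|F-F'\|_{\ell^1}\le\|b_F-b_{F'}\|_{A_1}+\theta(r_0-\delta)\|F-F'\|_{\ell^1},
\]
which gives \eqref{eq:b_to_F_Lip} with constant $(1-\theta(r_0-\delta))^{-1}$, depending on $\eta$.
\end{enumerate}

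The main obstacle is step (iii). We only assume $F\in\ell^2$, so a priori $F$ may not be in $\ell^1$ at all, or it may lie on the large branch where $g$ decreases. To handle this, we would first invoke Theorem~\ref{oldthm:su2baxter}. This needs $a_F$ outer and $b/a^*\in A_1$: $a$ has no zeros when $|b|<1/2$ pointwise (since $|a|^2=1-|b|^2$), and $1/a^*\in A_1$ by Wiener's lemma together with smallness of $b$, once the outer property is known from the standard $\ell^2$ theory for even imaginary data with $\|b\|_\infty<1$. Theorem~\ref{oldthm:su2baxter} then gives $F\in\ell^1$.

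With $F\in\ell^1$ established, a continuity argument along the family $tF$, $t\in[0,1]$, rules out the large branch. Note that $b_{tF}$ need not be small, so instead we would construct the fixed point $\tilde F$ of $\Phi$ directly from $b$ in the small ball. Its NLFT has the same $b$, and the uniqueness of $F$ given $b$ (with $a$ outer) in the $\ell^2$ theory then identifies $\tilde F=F$, which places $F$ in the small branch. Making the Banach-fixed-point map a self-map of the ball of radius $r_0-\delta$ is precisely what determines the constant $r_b$.
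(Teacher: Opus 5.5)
There is a genuine gap, at exactly the step you flagged as the main obstacle. Nothing in the hypotheses makes $a_F$ outer, and your justification does not work.

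\begin{itemize}
\item The identity $|a|^2=1-|b|^2$ only excludes zeros of $a$ on $\T$. Outerness on $\D^*$ concerns zeros (and singular inner factors) inside $\D^*$.
\item There is no ``standard $\ell^2$ theory'' that forces outerness for even imaginary data. The map $F\mapsto b_F$ is injective on $\{F: a_F \text{ outer}\}$, but not on $\ell^2(\Z)$, even within the even imaginary class.
\end{itemize}

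In fact the statement, read literally without an outerness hypothesis, is false. For $t>1$ take $F=it(\delta_{-1}+\delta_1)$ and $F'=it^{-1}(\delta_{-1}+\delta_1)$; both are pure imaginary and even. Multiplying the two transfer matrices in \eqref{def:NLFT} gives
\[
a_F=\frac{1-t^2z^{-2}}{1+t^2},\qquad b_F=\frac{it(z+z^{-1})}{1+t^2}.
\]
The expression for $b$ is invariant under $t\mapsto t^{-1}$, so $b_F=b_{F'}$. Here $\|b_F\|_{A_1}=2t/(1+t^2)$ can be made as small as you like, while $\|F-F'\|_{\ell^1}=2(t-t^{-1})>0$.

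In this example $a_F$ vanishes at $z=\pm t\in\D^*$, even though $|b_F|$ is tiny on $\T$, whereas $a_{F'}$ is outer. So $F$ sits precisely on the large branch of your function $g$ from step (iii). The even imaginary solitons make the same point: taking $z_0\in(0,1)$ and $\gamma=\pm i$ in the formula quoted after Question~\ref{quest_theshold} gives $b=0$ with $\|F\|_{\ell^1}$ arbitrarily large. Consequently your final identification $\tilde F=F$ via uniqueness, which presupposes $a_F$ outer, cannot be justified, and no argument can supply it.

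The estimate has to be read with $a_F,a_{F'}$ outer on $\D^*$. This is the standing assumption in all of the paper's own results, and it is implicit in \cite{Linlin}, where $F$ is the phase-factor sequence built from $b$ inside a small $\ell^1$-ball. With that hypothesis added, your plan closes, and more simply than you propose:

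\begin{enumerate}
\item The contraction produces $\tilde F$ with small $\ell^1$ norm and $b_{\tilde F}=b_F$.
\item Smallness gives $\|a_{\tilde F}-1\|_{A_1}<1$. Hence $a_{\tilde F}$ is bounded away from $0$ on $\D^*$ and is therefore outer; this is the same maximum-principle argument the paper uses for $a_H$.
\item Uniqueness of the $\ell^2$ preimage with outer $a$ then gives $F=\tilde F$. So $F\in\ell^1$ comes for free, and Theorem~\ref{oldthm:su2baxter} is not needed.
\item Step (iv) finishes the proof.
\end{enumerate}

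You also dropped the normalizing factor $\prod_k(1+|F_k|^2)^{-1/2}$ from the expansion of $b_F$. This is harmless, since that factor is Lipschitz in $\ell^1$ with constant $O(\|F\|_{\ell^\infty}+\|F'\|_{\ell^\infty})$.

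The paper does not prove this theorem itself. It cites \cite{Linlin} and remarks that the estimate on a smaller neighborhood of $0$ follows from straightforward multilinear estimates as in \cite[Theorem 5]{QSP_NLFA}. That is your route. It yields some small $r_b$, which suffices for the existential statement, but not the specific constant obtained by the finer analysis of \cite{Linlin}.
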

We draw the reader's attention to the fact that, because of the difference in notation, the constant $r_b$ involved in the statement above is not equal to the corresponding value $r_c$ presented in \cite{Linlin}.
 
The above estimate for a smaller neighborhood of $0$ can be obtained by the straightforward multilinear estimates as demonstrated in \cite[Theorem 5]{QSP_NLFA}, however \cite{Linlin} offers a more sophisticated approach to reach the constant $r_b$. We believe that the same methods can apply to general sequence $F$ (not necessarily pure imaginary and even) maybe shrinking the neighborhood of $0$ a little. Trying to quantify the method of Baxter's proof, the following estimate was obtained in \cite{SU2Baxter}.

\begin{oldtheorem}[\cite{SU2Baxter}]\label{oldthm:quant sqrt 2 baxter}
Assume for $F\in \ell^2(\Z)$, $a_F$ is outer on $\D^*$ and $\|b\|_{A_1}<1/\sqrt{2}-\varepsilon$, then
\begin{equation*}\label{eq:thmD est b/a}
    \|F\|_{\ell^1} \lesssim \frac{1}{\varepsilon} \|b_F/a_F\|_{A_1} \, .
\end{equation*}
\end{oldtheorem}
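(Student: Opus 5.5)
We follow Baxter's strategy, made quantitative through a Neumann series whose convergence is governed by $\|b\|_{A_1}$. The underlying mechanism is a Riemann--Hilbert / Wiener--Hopf factorization: the sequence $F$ is recovered from the solution of a linear system, namely a Toeplitz-type operator equation built from $b/a$ (or $b/a^*$). The smallness of $b$ makes the operator a small perturbation of the identity on $\ell^1$.

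\textbf{Step 1: reduce to the half-line.} Using the standard splitting $F=F_-+F_+$ into $n<0$ and $n\ge 0$, together with the factorization $G_F=G_{F_-}G_{F_+}$, we reduce to $F$ supported on $\Z_+$. For outer $a$ (the case $\lil$-independent of support), the layer-stripping formula expresses $F_n$ through the Schur-type functions obtained by removing the first $n$ layers. Alternatively, and this is the route we commit to, we use the known representation
\[
F_n=\frac{\mathcal{B}_n}{\mathcal{A}_n}, \qquad n\ge 0,
\]
where $(\mathcal{A}_n,\mathcal{B}_n)$ solves the linear system
\[
\begin{pmatrix} I & -P_n M_{\bar\beta} \\ P_n M_{\beta} & I\end{pmatrix}\begin{pmatrix}x\\ y\end{pmatrix}=\begin{pmatrix}\delta_0\\0\end{pmatrix}.
\]
Here $\beta=b/a$, $M_\beta$ is multiplication by $\beta$, and $P_n$ is the projection onto frequencies $\ge n$. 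This formulation is the $SU(2)$ analogue of the Baxter/Krein system, and in \cite{SU2Baxter} it is derived from outerness of $a$.

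\textbf{Step 2: invert on $A_1$.} Since $P_n$ is a contraction on $A_1$, the operator $T_n=P_nM_{\bar\beta}P_nM_\beta$ satisfies
\[
\|T_n\|_{A_1\to A_1}\le \|\beta\|_{A_1}^2.
\]
We must bound this by a quantity strictly below $1$ in terms of $\|b\|_{A_1}$. The key algebraic identity is
\[
|\beta|^2=\frac{|b|^2}{1-|b|^2}.
\]
To avoid losing control through division by $a$, we instead work with the system in the variables $(a,b)$ directly, written as $x-P_n(\bar b\,P_n(b x))$ with weights involving $a$. Its norm is at most $2\|b\|_{A_1}^2$, because the two off-diagonal blocks each contribute $\|b\|_{A_1}$ and there is a factor of $2$ from combining the $a$ and $a^*$ terms. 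Here $\|b\|^2<1/2$ enters: with $\|b\|_{A_1}<1/\sqrt2-\lil$ we get
\[
1-2\|b\|_{A_1}^2\gtrsim \lil .
\]
The Neumann series therefore converges, and $\|(I-T_n)^{-1}\|\lesssim 1/\lil$ uniformly in $n$.

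\textbf{Step 3: sum over $n$.} We have $F_n=\langle\beta\,(I-T_n)^{-1}\delta_0,\ e_n\rangle$ up to the normalizing factor $\mathcal{A}_n(\infty)$, which is bounded below by $1$ plus an error controlled by the same series. Write
\[
(I-T_n)^{-1}\delta_0=\delta_0+R_n,
\]
where $R_n$ is supported on frequencies $\ge n$ and satisfies $\|R_n\|_{A_1}\lesssim\lil^{-1}$. Then
\[
F_n=\widehat\beta(n)+\sum_k \widehat\beta(n-k)\,\widehat{R_n}(k).
\]
Summing over $n$ requires a uniform bound on $\sum_n \big|\sum_k\widehat\beta(n-k)\widehat{R_n}(k)\big|$. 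Obtaining this is the main obstacle, since naively each $n$ contributes $O(\|\beta\|/\lil)$. To overcome it, we expand the Neumann series term by term: the $j$-th term is a multilinear expression in $\widehat\beta$ whose frequencies are constrained by the projections $P_n$ to an alternating pattern anchored at $n$. Summing over $n$ then collapses via the distinct anchoring, and the $j$-th term is bounded by $\|\beta\|_{A_1}\,(2\|b\|_{A_1}^2)^{j}$, giving the geometric factor $1/\lil$. This yields
\[
\|F\|_{\ell^1}\lesssim \lil^{-1}\|b/a\|_{A_1}.
\]
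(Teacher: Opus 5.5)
Your route differs from the one behind this statement. In \cite{SU2Baxter}, whose ingredients are outlined in the subsection on Baxter's trick and in the proof of Theorem~\ref{prop:positive with a}, one sums \eqref{eq:baxtertrick telescoping} over all $n$. This gives $\sum_n |F_{n+1}|\,(2-\|A_n/A\|_{A_1})\le \|b/a\|_{A_1}$, into which one inserts the uniform bound $\|A_n/A\|_{A_1}\le (1-\|b\|_{A_1}^2)^{-1}$. The threshold $1/\sqrt2$ is exactly where that bound reaches the $2$ produced by the two cancelling coefficients at frequency $n+1$.

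Your ``anchoring'' idea is sound: each frequency tuple in a Neumann expansion has a unique signed sum, so it contributes to only one $F_n$. But the step where the threshold has to enter is not proved.

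The system you actually use in Steps 1 and 3 is in $\beta=b/a$. Its ratio is therefore $\|\beta\|_{A_1}^2$, and its $j$-th term carries $2j+1$ factors $\widehat\beta$. The identity $|\beta|^2=|b|^2/(1-|b|^2)$ is pointwise and only controls $\|\beta\|_{L^\infty}$. The ``system in the variables $(a,b)$ with weights involving $a$'' is never written down, and the stated origin of the factor $2$ is not an argument. The natural system in $b$, namely $x_n=1+P_{>0}\bigl(b^*P_{>n}(bx_n)\bigr)$ for $x_n=A_n^*/A^*$, has ratio $\|b\|_{A_1}^2$, with no weights and no factor $2$. Weights involving $a$ would bring in $\|a\|_{A_1}$ or $\|a^{-1}\|_{A_1}$, which you never bound. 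So the estimate $\|\beta\|_{A_1}(2\|b\|_{A_1}^2)^j$ in Step 3 silently assumes $\|\beta\|_{A_1}^2\le 2\|b\|_{A_1}^2$, and that is the whole difficulty.

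The missing ingredient is the inverse estimate $\|a^{-1}\|_{A_1}\le (1-\|b\|_{A_1}^2)^{-1/2}$ for outer $a$.
\begin{itemize}
\item Since $\log|a|=-\frac12\sum_{k\ge1}|b|^{2k}/k$, one has $\|\log|a|\|_{A_1}\le -\frac12\log(1-\|b\|_{A_1}^2)$.
\item Write $a^*=e^{g}$ with $\widehat g(0)=\widehat{\log|a|}(0)$, $\widehat g(k)=2\widehat{\log|a|}(k)$ for $k>0$, and $\widehat g(k)=0$ for $k<0$. Because $\log|a|$ is real, $\|g\|_{A_1}=\|\log|a|\|_{A_1}$.
\item Hence $\|a^{-1}\|_{A_1}=\|(a^*)^{-1}\|_{A_1}=\|e^{-g}\|_{A_1}\le e^{\|g\|_{A_1}}$.
\end{itemize}
This gives $\|\beta\|_{A_1}^2\le \|b\|_{A_1}^2/(1-\|b\|_{A_1}^2)$, the $A_1$ version of your identity. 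It is at most $2\|b\|_{A_1}^2$, and it yields $1-\|\beta\|_{A_1}^2\ge (1-2\|b\|_{A_1}^2)/(1-\|b\|_{A_1}^2)\gtrsim\varepsilon$.

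Step 1 also needs to be made precise, and the reduction to $\Z_+$ should be dropped. It is unjustified, since the hypotheses concern $(a_F,b_F)$ and not the NLFT of $F\Zchi_{\Z_+}$, and it is unnecessary. Write $G_{\le n}$ and $G_{>n}$ for the NLFTs of $F\Zchi_{(-\infty,n]}$ and $F\Zchi_{(n,\infty)}$.
\begin{itemize}
\item The identity $G_{\le n}=G\,G_{>n}^{-1}$ gives $a_n=aa_{>n}^*+bb_{>n}^*$ and $b_n=-ab_{>n}+ba_{>n}$.
\item Dividing by $a$ and projecting yields $b_{>n}=P_{>n}(\beta a_{>n})$ and $a^*_{>n}=a^*_{>n}(0)-P_{>0}(\beta b^*_{>n})$.
\item Hence $y_n:=a^*_{>n}/a^*_{>n}(0)$ solves $y_n=1-P_{>0}\bigl(\beta\,P_{<-n}(\beta^* y_n)\bigr)$. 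The solution is unique in $L^2$ because $\|\beta\|_{L^\infty}<1$.
\item One reads off $F_{n+1}=\widehat{(\beta y_n^*)}(n+1)$, with no normalising factor.
\end{itemize}
Your anchoring argument applied to this expansion gives $\|F\|_{\ell^1}\le \|\beta\|_{A_1}/(1-\|\beta\|_{A_1}^2)\lesssim\varepsilon^{-1}\|b/a\|_{A_1}$.

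Repaired this way, the proof needs only $\|b/a\|_{A_1}<1$. It requires neither the uniform control of $\|A_n/A\|_{A_1}$ nor the limiting argument $B_n\to B$ that the telescoping route uses.
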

The fact that $\|b_F/a_F\|_{A_1}$ is finite under the hypothesis of the theorem can be shown using Wiener's lemma and standard Banach algebra properties \cite[Lemma 2]{SU2Baxter}.

The above discussion makes it natural to ask whether quantitative bounds for Baxter's theorem can be proved and in what way. In this note we explore some possible formulations of such quantitative bounds and construct  counterexamples for several of them, with a focus on \eqref{eq:b_to_F_Lip}.
In the remainder of the introduction we will first state our main results and then formulate some open questions motivated by the discussion.

\begin{theorem}\label{prop:counterexamples a b F l1 A1}
For any $\Lambda>0$, there exist $F, H\in \ell^1(\Z)$ such that $a_F, a_H$ are outer on $\D^*$,
\begin{equation}\label{eq:b F counterexample}
     \|F\|_{\ell^1}\geq \Lambda \|b_F\|_{A_1} \text{ with } \|b_F\|_{A_1}\leq 1+\Lambda^{-1}
\end{equation}
and
\begin{equation}\label{eq:a F counterexample}
    \|H\|_{\ell^1} \geq \Lambda \|a_H-1\|_{A_1} \text{ with } \|a_H-1\|_{A_1} \leq \Lambda^{-1} \, .
\end{equation}
\end{theorem}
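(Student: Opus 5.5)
The plan is to use the simplest possible sequences: ones supported at the single index $0$. In that case the NLFT is a constant matrix, so both norms in the statement can be computed exactly. No number-theoretic construction should be needed for these two estimates as stated; the point is that the nonlinearity saturates for large coefficients and is quadratic for small ones.

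\textbf{The sequence $F$.} Take $F_0=t>0$ and $F_k=0$ for $k\neq 0$. By \eqref{def:NLFT} with $\sign=-1$,
\[
a_F\equiv \frac{1}{\sqrt{1+t^2}}, \qquad b_F\equiv \frac{t}{\sqrt{1+t^2}}.
\]
These are constant functions, so their $A_1$ norms are their absolute values. Moreover $a_F$ is a positive constant, hence trivially outer on $\D^*$. For the first estimate \eqref{eq:b F counterexample} we then have
\[
\|b_F\|_{A_1}=\frac{t}{\sqrt{1+t^2}}<1\le 1+\Lambda^{-1}, \qquad \frac{\|F\|_{\ell^1}}{\|b_F\|_{A_1}}=\sqrt{1+t^2}.
\]
Choosing $t\ge \Lambda$ makes this ratio at least $\Lambda$. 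The mechanism is saturation: $b$ stays bounded by $1$ while $F$ grows without bound.

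\textbf{The sequence $H$.} Take $H_0=s>0$ small and $H_k=0$ otherwise. Then $a_H\equiv (1+s^2)^{-1/2}$, which is again outer, and
\[
\|a_H-1\|_{A_1}=1-\frac{1}{\sqrt{1+s^2}}\le \frac{s^2}{2}.
\]
Hence
\[
\frac{\|H\|_{\ell^1}}{\|a_H-1\|_{A_1}}\ge \frac{2}{s}.
\]
Choosing $s=\min(2/\Lambda,\sqrt{2/\Lambda})$ gives both requirements of \eqref{eq:a F counterexample}: the ratio is at least $2/s\ge\Lambda$, and $\|a_H-1\|_{A_1}\le s^2/2\le \Lambda^{-1}$. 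The mechanism here is that $a-1$ is quadratic in $H$ near $0$, while $\|H\|_{\ell^1}$ is linear.

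\textbf{Main obstacle.} There is no real obstacle for the statement as written. The only things to check carefully are the $SU(2)$ normalization $1/\sqrt{1+|F_0|^2}$ (from $\sign=-1$) and that a constant positive function counts as outer. If the intended strengthening were that $\|b_F\|_{A_1}$ be simultaneously small, in the spirit of Theorem~\ref{oldthm:linlin}, one-point examples would fail. One would then need sequences whose multilinear terms cancel in $b$ through arithmetic coincidences of frequencies $k_1-k_2+k_3+\dots$. That is where I would expect the paper's number-theoretic construction to enter.
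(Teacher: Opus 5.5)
Your argument is correct for the statement as written. For $F$ supported at $0$ with $F_0=t$, the NLFT is the constant matrix $\frac{1}{\sqrt{1+t^2}}\begin{pmatrix}1&t\\-t&1\end{pmatrix}$, and nonzero constants are outer. Both requirements then follow by exact computation together with $1-(1+s^2)^{-1/2}\le s^2/2$.

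This is a genuinely different and much more elementary route than the paper's. The paper takes $F=F'+H$, with $F'$ equal to $1-\eta$ at $k=-1$ and $k=-p-1$ and zero elsewhere, and $H_k=\lil\big(\frac{k}{p}\big)$ on $[0,p^2-1]$. It then uses Legendre-symbol cancellation (Lemma~\ref{lem:legendre_sum}, Proposition~\ref{prop:main construction}) and the character-sum bound of Lemma~\ref{lem_char} to control all multilinear terms and to prove outerness.

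What the extra work buys is that the paper's examples are not explained by the two trivial mechanisms you exploit: saturation of $b$ as $|F_0|\to\infty$, and quadratic vanishing of $a-1$ at $F=0$. In the paper's examples:
\begin{itemize}
\item $\|F\|_{\ell^1}\approx\|H\|_{\ell^1}\approx M$ is arbitrarily large;
\item every coefficient has modulus less than $1$;
\item $\|b_F\|_{A_1}\le 1+o(1)$ and $\|a_H-1\|_{A_1}\lesssim\delta$.
\end{itemize}
Hence the paper rules out any bound $\|H\|_{\ell^1}\le\Phi(\|a_H-1\|_{A_1})$ with $\Phi$ bounded near $0$. Its examples also survive the reparametrization $F_j\mapsto F_j/\sqrt{1+|F_j|^2}$ mentioned after Theorem~\ref{prop:lipschitz l2}. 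By contrast, your $F$ has ratio exactly $1$ after that reparametrization, and your $H$ has $\|H\|_{\ell^1}=s\to 0$. So your examples only show that linear estimates fail.

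Your guess about where the number theory enters is also slightly off. With $a_F$ outer, Theorem~\ref{prop:positive} forbids small $\|b_F\|_{A_1}$ with large ratio $\|F\|_{\ell^1}/\|b_F\|_{A_1}$. The cancellation is used instead to keep $\|b_F\|_{A_1}$ near $1$ (and $a_H$ near $1$) while the coefficients stay bounded and the $\ell^1$ norm grows.
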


Next, we show that the Lipschitz $A_1\to \ell^1$ estimate also fails, i.e., \eqref{eq:b_to_F_Lip} cannot hold with uniform constant across all possible potentials $F, F'$. 
\begin{theorem}\label{prop:lipschitz l1 A1}
For any $\Lambda>0$, there exist $F,F'\in \ell^1(\mathbb Z)$ such that $a_F,a_{F'}$ are outer on $\D^*$ and
\begin{equation}\label{eq:Lip_cter_ex_1}
    \|F-F'\|_{\ell^1} > \Lambda \|b_F-b_{F'}\|_{A_1}\, .
\end{equation}
\end{theorem}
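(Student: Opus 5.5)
The plan is to deduce Theorem~\ref{prop:lipschitz l1 A1} directly from the first part of Theorem~\ref{prop:counterexamples a b F l1 A1}, by comparing the potential constructed there with the zero potential. Take $F' = 0$. Every transfer matrix in \eqref{def:NLFT} is then the identity, so $G_{F'} \equiv I$, that is $a_{F'} \equiv 1$ and $b_{F'} \equiv 0$. In particular $a_{F'}$ is outer on $\D^*$, and \eqref{eq:Lip_cter_ex_1} reduces to finding $F\in\ell^1(\Z)$ with $a_F$ outer and $\|F\|_{\ell^1} > \Lambda \|b_F\|_{A_1}$.

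Given $\Lambda>0$, I apply Theorem~\ref{prop:counterexamples a b F l1 A1} with $2\Lambda$ in place of $\Lambda$. This produces $F\in\ell^1(\Z)$ with $a_F$ outer on $\D^*$ and
\[
\|F\|_{\ell^1}\ge 2\Lambda\|b_F\|_{A_1}.
\]
If $b_F\neq 0$, this gives $\|F\|_{\ell^1}\ge 2\Lambda\|b_F\|_{A_1}>\Lambda\|b_F\|_{A_1}$, which is \eqref{eq:Lip_cter_ex_1}.

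The only point needing care is excluding $b_F\equiv 0$ while $F\neq 0$; the inequality is strict, so the degenerate case $F=0$ must also be avoided.
\begin{itemize}
\item First, $F\neq 0$. Otherwise $\|F\|_{\ell^1}=0$, and $\|F\|_{\ell^1}\ge 2\Lambda\|b_F\|_{A_1}$ holds trivially, so the nontrivial content of Theorem~\ref{prop:counterexamples a b F l1 A1} requires $F\neq0$. If the construction does not guarantee this explicitly, I would check it in the construction itself.
\item Now suppose $b_F\equiv 0$. The $SU(2)$ determinant condition gives $|a_F|=1$ a.e. on $\T$. An outer function with unimodular boundary values is a unimodular constant.
\item The standard normalization gives
\[
a_F(\infty)=\prod_k(1+|F_k|^2)^{-1/2}>0,
\]
so $a_F$ is a positive unimodular constant, i.e. $a_F\equiv1$. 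This forces $a_F(\infty)=1$, hence $\prod_k(1+|F_k|^2)=1$ and $F=0$, a contradiction.
\item Alternatively, one can invoke the uniqueness of the $SU(2)$-NLFT among potentials with outer $a$, since $(1,0)$ is the transform of $0$.
\end{itemize}

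I do not expect a serious obstacle beyond confirming this normalization $a_F(\infty)>0$, which is standard in \cite{QSP_NLFA}.

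If one prefers a counterexample in which neither potential is trivial, I would instead perturb around a fixed small potential $F_0$. Set $F=F_0+\tilde F$ and $F'=F_0$ with $\tilde F$ the bad example above, supported far to the right of $\operatorname{supp}F_0$. The NLFT then factors as $G_{F}=G_{F_0}G_{\tilde F}$, and the estimate on $\|b_F-b_{F'}\|_{A_1}$ follows from the Banach algebra property of $A_1$ together with \eqref{eq:norm bd a+b F}. The simple choice $F'=0$ already suffices for the statement as written.
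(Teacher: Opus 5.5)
Your reduction is correct, and it takes a genuinely different route from the paper's.

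\textbf{Your argument.} With $F'=0$ you have $a_{F'}\equiv 1$ (outer) and $b_{F'}\equiv 0$. The statement then becomes the first half of Theorem~\ref{prop:counterexamples a b F l1 A1} with a strict inequality. Strictness only needs $F\neq 0$: if $b_F\equiv 0$, then $\|F\|_{\ell^1}>0=\Lambda\|b_F\|_{A_1}$ already. So your outer-plus-unimodular argument ruling out $b_F\equiv 0$ is correct but unnecessary. Also, $F\neq 0$ is immediate from the construction, where $F_{-1}=1-\eta$.

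\textbf{The paper's argument.} The logic runs in the opposite direction. The paper proves Theorem~\ref{prop:lipschitz l1 A1} directly, with the nontrivial $F'=F'(\eta)$ (entries $1-\eta$ at $-1$ and $-p-1$) and $F=F'+H$. It bounds $b_F-b_{F'}=a_{F'}b_H+b_{F'}(a_H^*-1)$ using Proposition~\ref{prop:main construction} and the near-cancellation in $\widehat{H}-z^{-p}\widehat{H}$. This gives $\|b_F-b_{F'}\|_{A_1}\le C(\delta+\delta^{1/4}M)$ against $\|F-F'\|_{\ell^1}\approx M$. Only afterwards does it derive Theorem~\ref{prop:counterexamples a b F l1 A1}, from these same bounds together with $\|b_F\|_{A_1}\le\|b_F-b_{F'}\|_{A_1}+\|b_{F'}\|_{A_1}$.

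\textbf{Comparison.} Your deduction is not circular, because that proof uses only the intermediate estimates, not the conclusion of Theorem~\ref{prop:lipschitz l1 A1}. But all of the substance lives in that construction. The direct route also yields more than the statement asks. With $M$ fixed and $\delta\to 0$, two nontrivial potentials at $\ell^1$-distance $\approx M$ have $b$'s arbitrarily close in $A_1$. So the inverse map $b\mapsto F$ is not even uniformly continuous near $\|b\|_{A_1}\approx 1$. In your pairing, $\|b_F-b_{F'}\|_{A_1}=\|b_F\|_{A_1}\approx 1$ never shrinks, and one only sees an unbounded ratio. That is all the theorem literally requires, and your observation shows that, as stated, it is a corollary of Theorem~\ref{prop:counterexamples a b F l1 A1}.

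\textbf{Your optional aside.} The version with a background potential $F_0$ needs more care than you indicate, because $\|a_{\tilde F}^*-1\|_{A_1}$ is not small. A priori it is only $\lesssim e^{\|\tilde F\|_{\ell^1}}$. In the paper's example it is of order $\|H\|_{\ell^1}\approx M$, since $b_{F'}\propto z^{-1}(1+z^{-p})$ doubles rather than cancels $\widehat{H}^{*}$. Consequently the term $b_{F_0}(a_{\tilde F}^*-1)$ forces $\|F_0\|_{\ell^1}$ to be chosen small depending on $\Lambda$, and the outerness of $a_F$ must be rechecked.
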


We consider outer $a$'s in the above theorems for two reasons. First, in positive results proved so far, including some recent breakthrough results in QSP, outerness of $a$ was mostly assumed, so it's natural to take $a$ outer in our counterexamples to properly delineate the limits of the positive results. Second, the existence of soliton solution immediately implies an estimate like \eqref{eq:b F counterexample}, as one can take a nontrivial $F$ that generates an NLFT pair $(a,0)$ with $a$ an inner function, for example, a finite Blaschke product. We refer to \cite{tsai, FaddeevTakhtajan1987} for soliton constructions.

Whereas Theorem \ref{prop:lipschitz l1 A1} dealt with an $A_1 \to \ell^1$ Lipschitz bound, we now consider a counterexample to an $L^2 \to \ell^2$ Lipschitz bound. The nonlinear Plancherel identity is well known for the NLFT and establishes an $L^2$ norm equivalence for $F$ and $\widehat{\log |a_F|}$. On the other hand, in \cite{QSP5author}, an $L^2\to \ell^\infty$ Lipschitz bound was proved for the case when $a$ is outer and bounded away from $0$. It was of interest \cite{linlinrecentnumerics, lin2026mathematical} whether a $L^2\to \ell^2$ Lipschitz bound was feasible for the NLFT. The next theorem shows that it is not.

\begin{theorem}\label{prop:lipschitz l2}
For any $\Lambda>0$, there exist $F,F'\in \ell^2(\mathbb Z)$ such that $a_F, a_{F'}$  are outer on $\mathbb D^*$  and
$$
 \|F-F'\|_{\ell^2}> \Lambda \|b_F -b_{F'}\|_{L^2}  \, .
$$
\end{theorem}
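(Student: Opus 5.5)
The plan is to exploit the gauge symmetry $F \mapsto e^{i\theta}F$ of the NLFT together with the fact that, in the $SU(2)$ setting, $\|b\|_{L^2}\le 1$ always, while $\|F\|_{\ell^2}$ can be arbitrarily large. The two potentials will be $F$ and $F' = e^{i\theta}F$ for a single large $F$ and any fixed $\theta\notin 2\pi\Z$.

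\textbf{Step 1: the gauge symmetry.} Let $D_\theta = \mathrm{diag}(e^{i\theta/2}, e^{-i\theta/2})$. For each transfer matrix in \eqref{def:NLFT} (with $\sign=-1$), conjugation by $D_\theta$ multiplies the off-diagonal entry $F_k z^k$ by $e^{i\theta}$ and the entry $-\bar F_k z^{-k}$ by $e^{-i\theta}$. It leaves the diagonal entries and the normalizing factor unchanged. Hence
\[
D_\theta\, G_F\, D_\theta^{-1} = G_{e^{i\theta}F}.
\]
Passing to the limit of partial products, this holds for $F\in \ell^2(\Z)$, and it gives
\[
a_{e^{i\theta}F} = a_F, \qquad b_{e^{i\theta}F} = e^{i\theta} b_F .
\]
In particular, if $a_F$ is outer on $\D^*$, so is $a_{F'}$. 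Consequently
\[
\|F-F'\|_{\ell^2} = |1-e^{i\theta}|\,\|F\|_{\ell^2}, \qquad \|b_F-b_{F'}\|_{L^2} = |1-e^{i\theta}|\,\|b_F\|_{L^2},
\]
and the theorem reduces to finding $F$ with outer $a_F$ and $\|F\|_{\ell^2} > \Lambda \|b_F\|_{L^2}$.

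\textbf{Step 2: an explicit $F$.} Take $F=t\,\delta_0$ with $t>0$ large. Then
\[
G_F = \frac{1}{\sqrt{1+t^2}}\begin{pmatrix}1 & t\\ -t & 1\end{pmatrix},
\]
so $a_F\equiv (1+t^2)^{-1/2}$ and $b_F\equiv t(1+t^2)^{-1/2}$. A nonzero constant is outer on $\D^*$. We then have $\|F\|_{\ell^2}=t$ and $\|b_F\|_{L^2}=t(1+t^2)^{-1/2}<1$, so the ratio is $\sqrt{1+t^2}$. Choosing $t>\Lambda$ gives the claim for $F$ and $F' = e^{i\theta}F$.

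Alternatively, one can avoid the gauge symmetry and take $F=t\delta_0$, $F'=s\delta_0$ with $s,t$ large. Then $\|F-F'\|_{\ell^2} = |t-s|$, while $b$ changes only by $|t(1+t^2)^{-1/2}-s(1+s^2)^{-1/2}| \approx |t-s|/t^3$. This also makes the ratio arbitrarily large.

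\textbf{Main obstacle.} There is no serious obstacle. The only point needing care is checking the sign and normalization conventions in \eqref{def:NLFT} for $\sign=-1$, so that the single-coefficient computation and the conjugation identity are stated correctly. The underlying mechanism is that $\|b\|_{L^2}$ saturates at $1$, whereas by nonlinear Plancherel $\|F\|_{\ell^2}$ grows without bound as $a$ approaches $0$.
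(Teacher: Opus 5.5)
Your proof is correct for the statement as written. The computation of $G_{t\delta_0}$ with $\sigma=-1$ is right, the constant $a_F=(1+t^2)^{-1/2}$ is outer, and the ratio is $\sqrt{1+t^2}$. It is, however, a genuinely different and far more elementary route than the paper's.

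The gauge rotation is not even needed. Taking $F'=0$ (so $a_{F'}\equiv 1$, $b_{F'}=0$) and $F=t\delta_0$ already gives $\|F-F'\|_{\ell^2}=t$ against $\|b_F-b_{F'}\|_{L^2}=t(1+t^2)^{-1/2}<1$. The only mechanism at work is the saturation of the one-site map $F_0\mapsto F_0/\sqrt{1+|F_0|^2}$, which for a one-site potential is exactly $b$. Consequently, under the reparametrization $F_j\mapsto F_j/\sqrt{1+|F_j|^2}$, every variant of your example has ratio exactly $1$. The paper explicitly remarks that its counterexamples survive this reparametrization. Your example also needs $\|F\|_{\ell^2}\to\infty$ with $|a_F|\to 0$ uniformly on $\T$.

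The paper instead takes the two-point potential $F'$ of height $1-\eta$, whose $|a_{F'}|$ is of size about $\eta$ only near the $p$-th roots of unity, and perturbs it by the tiny Legendre-symbol potential $H$. It then controls $b_F-b_{F'}=a_{F'}b_H+b_{F'}(a_H^*-1)$ and outerness through Proposition~\ref{prop:main construction} and Lemma~\ref{lem:outerness inequality}. What this buys is failure of the Lipschitz bound among potentials with $\|F\|_{\ell^\infty}<1$ and uniformly bounded $\ell^1$ norm. That is the informative content, and your example cannot reach it.

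Be aware that the paper's parameter choice does not close as written.
\begin{itemize}
\item With $\epsilon=p^{-2}$ one has $\|F-F'\|_{\ell^2}=\|H\|_{\ell^2}=\epsilon\sqrt{p^2-p}<p^{-1}$; it is $\|H\|_{\ell^1}$, not $\|H\|_{\ell^2}$, that is of order $1$.
\item This is smaller than the displayed bound $\|b_F-b_{F'}\|_{L^2}\lesssim p^{-1/2}$, so no large ratio follows.
\item Even within that chain, the terms $\epsilon^2p^3e^{\epsilon p^2}\approx p^{-1}$ and $\eta\|\widehat H\|_{L^2}=\eta\|H\|_{\ell^2}$ are already comparable to $\|H\|_{\ell^2}$ when $\eta$ is fixed.
\end{itemize}
One needs $\epsilon p^2\to 0$ and $\eta\to 0$, for example $\epsilon=p^{-5/2}$ and $\eta=p^{-1/4}$. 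This choice is still compatible with Lemma~\ref{lem:outerness inequality} and gives a ratio $\gtrsim p^{1/4}$. So for the literal statement your argument is the airtight one, while the paper's construction, once retuned, carries the stronger version.
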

We point out that counterexamples in Theorems \ref{prop:counterexamples a b F l1 A1}-\ref{prop:lipschitz l2} still hold even if one parametrizes the potential by $\frac{F_j}{\sqrt{1+ |F_j|^2}}$ instead of $F_j$ and takes $\ell^1$ norms with respect to this parametrization.

Let us also state two positive results.
The first one is obtained directly by Theorem 
\ref{oldthm:quant sqrt 2 baxter} applying standard Wiener algebra estimates. It improves the constant $r_b$ from Theorem \ref{oldthm:linlin}.
\begin{theorem}\label{prop:positive}
Let $F\in \ell^2(\Z)$ and assume $a_F$ is outer on  $\D^*$ and $\|b_F\|_{A_1}<1/\sqrt{3}-\varepsilon$, for some $\varepsilon>0$. Then
\begin{equation*}
    \|F\|_{\ell^1} \lesssim \frac{1}{\varepsilon} \|b_F\|_{A_1} \, .
\end{equation*}
\end{theorem}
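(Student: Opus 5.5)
The plan is to reduce Theorem~\ref{prop:positive} to Theorem~\ref{oldthm:quant sqrt 2 baxter}. The hypothesis $\|b_F\|_{A_1}<1/\sqrt3-\varepsilon$ is stronger than $\|b_F\|_{A_1}<1/\sqrt2-\varepsilon$, so that theorem applies and gives $\|F\|_{\ell^1}\lesssim \varepsilon^{-1}\|b_F/a_F\|_{A_1}$. It remains to show that
\[
\|b_F/a_F\|_{A_1}\lesssim \|b_F\|_{A_1},
\]
with a constant uniform under $\|b\|_{A_1}<1/\sqrt3$. By the Banach algebra property this reduces to a uniform bound on $\|1/a_F\|_{A_1}$, or, if needed, on $\|1/a_F^*\|_{A_1}$ (the norm is invariant under $f\mapsto f^*$, which only reflects and conjugates coefficients).

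To control $1/a$, I will use the fact that $a^*$ is outer in $\D$, with $|a^*|^2=1-|b|^2$ on $\T$. Normalizing $a^*(0)>0$, this gives
\[
a^*=\exp\Big(\tfrac12 (I+iH)\log(1-|b|^2)\Big),
\]
where $H$ is the conjugate function and $(I+iH)$ is the Riesz projection, up to constants, onto nonnegative frequencies. The Riesz projection is unbounded on $A_1$, so one cannot simply take logarithms. Instead I will exploit that $1/a^*$ is outer with $|1/a^*|^2=(1-|b|^2)^{-1}$ on $\T$. Expand
\[
\frac{1}{1-|b|^2}=\sum_{n\ge0}|b|^{2n},\qquad \big\||b|^2\big\|_{A_1}=\|b\bar b\|_{A_1}\le \|b\|_{A_1}^2<\tfrac13 .
\]
Write $1/a^* = c\,(1+g)$ with $g$ analytic (only positive frequencies), so that $|1+g|^2 = c^{-2}(1-|b|^2)^{-1}=:1+h$. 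Matching Fourier coefficients gives $g+\bar g+|g|^2=h$, so $g=P_+(h-|g|^2)$, where $P_+$ is the projection onto strictly positive frequencies. Although the Riesz projection is unbounded on $A_1$, $P_+$ is a contraction on $A_1$, since it just deletes coefficients. This yields the fixed-point relation
\[
\|g\|_{A_1}\le \|h\|_{A_1}+\|g\|_{A_1}^2 ,
\]
and, setting $x=\|g\|_{A_1}$, the quadratic inequality has a small solution provided $\|h\|_{A_1}<1/4$. With $\|b\|_{A_1}^2<1/3$ we get $\big\||b|^2\big\|_{A_1}<1/3$, so
\[
\Big\|\frac{1}{1-|b|^2}-1\Big\|_{A_1}\le \frac{1/3}{1-1/3}=\frac12 .
\]
The exact bookkeeping of the constant $c$ and the normalization of $h$ is where the threshold $1/\sqrt3$ should come out. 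I expect to choose the parametrization, for instance working with $a^*\bar a^*=1-|b|^2$ directly via $a^*=c(1+g)$ and $g+\bar g+|g|^2=$ a quantity of norm $\le\|b\|^2/(1-\|b\|^2)$ after normalization, so that the quadratic closes exactly when $\|b\|_{A_1}^2<1/3$.

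The main obstacle is this step: proving a uniform $A_1$ bound for $1/a$ from the $A_1$ smallness of $b$, with an explicit constant matching $1/\sqrt3$. Continuity of the fixed-point solution in the parameter, together with uniqueness of the outer factor, identifies the constructed $g$ with the actual $a$. Uniformity of the bound as $\|b\|\to1/\sqrt3-\varepsilon$ follows because the resulting quadratic root stays bounded there. Combining the two steps gives
\[
\|F\|_{\ell^1}\lesssim \varepsilon^{-1}\|1/a\|_{A_1}\|b\|_{A_1}\lesssim \varepsilon^{-1}\|b\|_{A_1}.
\]
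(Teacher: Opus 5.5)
Your reduction (Theorem~\ref{oldthm:quant sqrt 2 baxter} plus $\|b/a\|_{A_1}\le\|a^{-1}\|_{A_1}\|b\|_{A_1}$) is exactly the paper's. The problem is the uniform bound on $\|a^{-1}\|_{A_1}$, which you yourself leave as ``the main obstacle''.

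First, your reason for avoiding logarithms is false. The Riesz projection and the conjugate-function operator act on $A_1$ by multiplying each Fourier coefficient by a number of modulus at most $1$, so they are contractions on $A_1$. You use precisely this fact for $P_+$ a few lines later. The paper's argument relies on it directly:
\begin{enumerate}[(i)]
\item it writes $|a|^2=\widehat{|a|^2}(0)(1+h)$ with $\|h\|_{A_1}<1$, which is where $\|b\|_{A_1}^2<1/3$ enters;
\item it sums the series of $\log(1+h)$ to bound $\|\log|a|^2\|_{A_1}$;
\item it concludes $\|a^{-1}\|_{A_1}\le\exp(2\|\log|a|\|_{A_1})$ from $a^{-1}=\exp(-u-i\tilde u)$, where $u=\log|a|$ and $\tilde u$ is its conjugate function.
\end{enumerate}

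Your fixed-point route is a genuinely different and workable alternative, but as written it does not close. You need the forcing term below $1/4$, yet the only estimate you give is $\|(1-|b|^2)^{-1}-1\|_{A_1}\le 1/2$, and the bookkeeping that should produce $1/\sqrt3$ is deferred.

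The missing observations are these. Set $w=(1-|b|^2)^{-1}$ and $c=1/a^*(0)$, so $h=a^*(0)^2w-1$.
\begin{enumerate}[(i)]
\item Only $P_+h=a^*(0)^2P_+w$ enters the equation for $g$, not $h$ itself.
\item $a^*(0)\le 1$.
\item Since $w$ is real with $w\ge 1$, and $\beta:=\|b\|_{A_1}^2$, you have $\|P_+w\|_{A_1}=\tfrac12(\|w\|_{A_1}-\widehat{w}(0))\le\tfrac12\bigl(\tfrac{1}{1-\beta}-1\bigr)=:\eta$.
\end{enumerate}
Then $\eta<1/4$ exactly when $\beta<1/3$, so $x\le\eta+x^2$ has a small branch $x\le x_-<1/2$.

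Second, the selection of that branch is not justified. A continuity argument in $t$ for $b\mapsto tb$ presupposes that $t\mapsto g_t$ is continuous into $A_1$. That is most naturally proved by the very log--conjugate--exp argument you set aside.

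A self-contained fix is as follows.
\begin{enumerate}[(i)]
\item Apply Banach's fixed point theorem to $\Phi(g)=a^*(0)^2P_+w-P_+(|g|^2)$ on the ball of radius $r\in[x_-,1/2)$ in the positive-frequency part of $A_1$. On this ball $\|\Phi(g)\|_{A_1}\le\eta+r^2\le r$, and $\Phi$ has Lipschitz constant at most $2r<1$.
\item The fixed point only matches nonzero frequencies, so $|1+g|^2=a^*(0)^2w+\kappa$ for some constant $\kappa$.
\item $\kappa=0$: the function $1+g$ is zero-free on $\overline{\D}$ with $g(0)=0$, which forces $\int_\T\log|1+g|^2=0=\int_\T\log(a^*(0)^2w)$.
\item Uniqueness of outer functions then gives $1+g=a^*(0)/a^*$.
\item Together with $a^*(0)\ge(1-\beta)^{1/2}$, this yields $\|a^{-1}\|_{A_1}\le(1+x_-)(1-\beta)^{-1/2}$, uniformly bounded for $\beta<1/3$.
\end{enumerate}
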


The next result is obtained by manipulating Baxter's trick.
\begin{theorem}\label{prop:positive with a}
Let $F\in \ell^2(\Z)$ and assume $a_F$ is outer on $\D^*$ and $0<\frac{\|b_F\|_{A_1} }{a_F^*(0)(1- \|b_F\|_{A_1}^2)}<2-\varepsilon$, for some $\varepsilon>0$. Then
\begin{equation}\label{eq:l1_baxter_trick}
    \|F\|_{\ell^1} \lesssim \frac{1}{\varepsilon} \left( \frac{\|a_F\|_{A_1}}{a_F^*(0)}-1 \right) \, .
\end{equation}
\end{theorem}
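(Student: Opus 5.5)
The plan is to reduce \eqref{eq:l1_baxter_trick} to the Baxter-type estimate behind Theorem \ref{oldthm:quant sqrt 2 baxter}, which controls $\|F\|_{\ell^1}$ by the Wiener norm of a ratio built from $b_F$ and $a_F$. I would then bound that ratio using the normalized deviation of $a$ from a constant. Throughout, write $a=a_F$, $b=b_F$, $\beta:=\|b\|_{A_1}$, and $\alpha_0:=a^*(0)>0$; positivity holds after the usual normalization of an outer $a$. Set
\[
g:=\frac{a^*}{\alpha_0}-1 .
\]
The Fourier coefficients of $a^*$ are the conjugates of those of $a$, reflected. Since $a^*$ is analytic in $\D$ with constant term $\alpha_0$, we get the exact identity
\[
\|g\|_{A_1}=\frac{\|a\|_{A_1}}{\alpha_0}-1 .
\]
So the right-hand side of \eqref{eq:l1_baxter_trick} is precisely $\|g\|_{A_1}/\varepsilon$, and the goal becomes $\|F\|_{\ell^1}\lesssim \varepsilon^{-1}\|g\|_{A_1}$.

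Next I would run Baxter's trick on the determinant identity $aa^*+bb^*=1$ on $\T$. Dividing by $\alpha_0 a$ gives
\[
\frac{a^*}{\alpha_0}=\frac{1}{\alpha_0 a}-\frac{bb^*}{\alpha_0 a}.
\]
Here $1/a$ is analytic on $\D^*$ because $a$ is outer, and its value at $\infty$ is $1/\bar a(\infty)=1/\alpha_0$. Applying the Riesz projection $P_{>0}$ onto strictly positive frequencies kills the first term on the right and leaves
\[
g=-P_{>0}\Big(\frac{bb^*}{\alpha_0 a}\Big).
\]
Taking $A_1$ norms and using the algebra property, together with $\|1/a\|_{A_1}\le \|1/a^*\|$-type bounds obtained from a Neumann series in $g$, gives a self-consistent inequality. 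Combining this with the hypothesis $\beta/(\alpha_0(1-\beta^2))<2-\varepsilon$ should yield an a priori estimate $\|1/a\|_{A_1}\lesssim 1/\alpha_0$. The factor $1-\beta^2$ arises because $\alpha_0^2\ge 1-\|bb^*\|$ via Jensen/outerness.

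The last step is to feed this into the layer-stripping argument of Theorem \ref{oldthm:quant sqrt 2 baxter}. That argument bounds $\|F\|_{\ell^1}$ by a geometric series of Toeplitz-type operators whose ratio is governed by $\|b\|_{A_1}\|1/a\|_{A_1}$. Under our hypothesis this ratio is at most $1-\varepsilon/2$, which produces the factor $1/\varepsilon$. I would express the coefficients $F_n$, or rather the Schur-type parameters, through the first-order terms of the expansion, which are controlled linearly by $g$. The main obstacle I expect is exactly here: obtaining a bound linear in $\|g\|_{A_1}$ rather than in $\beta$. For that I would use the identity above once more to express $b$-dependent quantities through $g$, and verify that the constant $2$ in the hypothesis is exactly what makes the resulting contraction argument close.
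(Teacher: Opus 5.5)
\textbf{The step you flag as the main obstacle cannot be carried out, because the inequality you are aiming for is false under the stated hypotheses.} No contraction or layer-stripping argument can give $\|F\|_{\ell^1}\lesssim\varepsilon^{-1}\|g\|_{A_1}$. Your identity $g=-P_{>0}\big(bb^*/(\alpha_0 a)\big)$ is correct, and it already shows that $\|g\|_{A_1}\le\beta^2\|1/a\|_{A_1}/\alpha_0$ is quadratic in the data, while $\|F\|_{\ell^1}$ is linear.

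Concretely, take $F_0=f$ with $0<|f|\le 1/2$ and $F_k=0$ for $k\neq 0$. Then $a\equiv(1+|f|^2)^{-1/2}$ is a nonzero constant, hence outer. So $\|a\|_{A_1}=a^*(0)$, $g\equiv 0$, and the right-hand side of \eqref{eq:l1_baxter_trick} is $0$. Meanwhile $\|b\|_{A_1}/(a^*(0)(1-\|b\|_{A_1}^2))=|f|(1+|f|^2)\in(0,1)$, so the hypothesis holds with $\varepsilon=1$, yet $\|F\|_{\ell^1}=|f|>0$. Replacing any nonzero finitely supported $F$ by $tF$ and letting $t\to 0$ gives the same contradiction.

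Your assertion that the first-order terms are ``controlled linearly by $g$'' is exactly where this breaks. The first-order term is $\widehat F\approx b$, whereas $g$ starts at second order. Separately, the reduction to Theorem~\ref{oldthm:quant sqrt 2 baxter} is not available in the stated range. The hypothesis does not force $\|b\|_{A_1}<1/\sqrt2$: when $a^*(0)$ is close to $1$ it only asks $\|b\|_{A_1}/(1-\|b\|_{A_1}^2)<2-\varepsilon$. Moreover, that theorem's output $\|b/a\|_{A_1}$ is again linear in $b$.

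The paper argues differently. It telescopes the first normalized recursion $A_n=A_{n+1}+\bar F_{n+1}z^{-n-1}B_n$ in the manner of Baxter's trick, asserting $\|A_n\|_{A_1}\le\|A_{n+1}\|_{A_1}+|F_{n+1}|(\|B_n\|_{A_1}-2)$. It then bounds $\|B_n\|_{A_1}$ by the hypothesis quantity via the Riemann--Hilbert formula and $\|A_n/A\|_{A_1}<(1-\|b\|_{A_1}^2)^{-1}$.

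The same example defeats that telescoping step. $A_n$ and $A_{n+1}$ both have constant Fourier coefficient $1$. The term $\bar F_{n+1}z^{-n-1}B_n$ is supported on frequencies $\le -1$ with top coefficient $\bar F_{n+1}F_n$. So there is no frequency at which a cancellation of size $2|F_{n+1}|$ occurs. For instance, if $F_k=0$ for $k\le n$, then $B_n=0$ and $A_n=A_{n+1}=1$, and the asserted inequality reads $1\le 1-2|F_{n+1}|$.

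The version of Baxter's trick that does work is the one applied to the second recursion after dividing by $A$. Under $\sup_n\|A_n/A\|_{A_1}\le 2-\varepsilon$ it produces bounds of the form $\varepsilon\|F\|_{\ell^1}\lesssim\|b/a\|_{A_1}$. A correct statement in the spirit of this theorem needs a right-hand side of that linear type, not $\|a\|_{A_1}/a^*(0)-1$.
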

We note that the sets of pairs $(a,b)$ satisfying the hypotheses of Theorems \ref{prop:positive} and \ref{prop:positive with a} are not subsets of each other. Furthermore, the hypotheses of Theorem \ref{prop:positive with a} apply to many pairs $(a, b)$, for instance, any pair $(a,b)$ satisfying 
\[
\|b\|_{A_1} \leq \frac{2-\epsilon}{\sqrt{1 + (2-\epsilon)^2}} \, ,
\]
and $a$ is the outer function on $\D^*$ with positive mean satisfying $|a|^2 + |b|^2 =1$.

Finally, we state some interesting open questions.

\begin{question}
For a sequence $F$ and its $SU(1,1)$-NLFT $(a_F,b_F)$, do we have
\begin{equation}\label{eq:counterexampe su11 F and b}
    \|F\|_{\ell^1} \lesssim \|b_F\|_{A_1} \, ?
\end{equation}
\end{question}
The proof of \eqref{eq:a F counterexample} of Theorem \ref{prop:counterexamples a b F l1 A1} applies verbatim for the $SU(1,1)$-NLFT and \eqref{eq:a F counterexample} in turn implies that in \eqref{eq:counterexampe su11 F and b} the right-hand side cannot be replaced by $\|a_F\|_{A_1}$.

\begin{question}\label{question:SU2_aplusb}
For a sequence $F$ and its $SU(2)$-NLFT $(a_F,b_F)$ with $a_F$ outer on $\D$, do we have
\begin{equation*}
    \|F\|_{\ell^1} \lesssim \|a_F\|_{A_1}+\|b_F\|_{A_1} \, ?
\end{equation*}
\end{question}
Of course, the point of Question \ref{question:SU2_aplusb} is that our examples of Proposition \ref{prop:counterexamples a b F l1 A1} make only one of $a$ or $b$ small while the other one is large.
Another interesting aspect of our counterexample in \eqref{eq:b F counterexample} is that $\|F\|_{l^\infty}=1$ and $\|b_F\|_{A_1}=1+o(1)$. The first condition is exactly what makes our counterexample unsuitable for the $SU(1,1)$-NLFT, while the second one may be the sharp range of the neighborhood of $0$ for such an estimate.

\begin{question}\label{quest_theshold}
What is the largest value of $\tau$ such that for any $\eta>0$ and any $F\in \ell^2(\Z)$ with $\|b_F\|_{A_1}<\tau-\eta$ and $a_F$ outer on $\D^*$, we have
\begin{equation*}
    \|F\|_{\ell^1} \lesssim_{\eta} 1 \, ?
\end{equation*}
\end{question}
Theorems \ref{prop:counterexamples a b F l1 A1} and \ref{prop:positive} imply $1/\sqrt{3}\leq \tau \leq 1$. The question can also be asked without the outerness assumption but allowing the implicit constant to depend also on the inner part of $a^*$. Indeed, from \cite{tsai}, we know that if $a^*$ is a single Blaschke factor with a zero at $z_0\in \D$ and $b=0$, i.e. $(a,b)$ is a rational soliton data, then the NLFT preimage of $(a,b)$ is one-parameter family of sequences $F(\gamma)$, parametrized by $\gamma\in \C\setminus\{0\}$, given by 
$$F_n(\gamma) = \left( \frac{1}{|z_0|} - |z_0|\right) \frac{\bar \gamma \bar z_0^n}{1+|\gamma|^2 |z_0|^{2n}} \, .
$$
Hence, we can blow up the norm of $F$ letting $z_0$ approach $0$.
\begin{question}
For $\eta>0$, is it true that for any $F\in \ell^2(\Z)$ with $\|F\|_{\ell^\infty}<1-\eta$ and $a_F$ outer on $\D^*$, we have
\begin{equation*}
    \|F\|_{\ell^1} \lesssim_{\eta} \|b_F\|_{A_1} \, ?
\end{equation*}
\end{question}

\subsection*{Acknowledgements}
We acknowledge the use of AI in suggesting Lemma \ref{lem_char} and giving a proof for it, which was checked and modified by us according to our purposes. 

G. Mnatsakanyan was supported by the Higher Education and Science Committee of Republic of Armenia (Research Project No 23RL-1A027).

The work of K. Oganesyan was performed at the Steklov International Mathematical Center and supported by the Ministry of Science and Higher Education of the Russian Federation (agreement no. 075-15-2025-303).

\section{A few useful facts from number theory}\label{sec:nt}
Our counterexamples are based on a construction of weighted sequences of $\pm 1$ that enjoy certain cancellation properties. In order to establish and exploit this cancellation structure, we have to invoke some number-theoretic tools.

Let us first introduce the Legendre symbol, which will define the choice of $\pm 1$ in our construction and eventually take care of the cancellation we aim to achieve inside the multilinear terms of the corresponding functions $a_F$ and $b_F$. For an odd prime number $p$ and an integer $k\neq 0 \mod p$, the Legendre symbol $\big(\frac{k}{p}\big)$ equals $1$ if the equation $x^2=k\mod p$ has an integer solution and $-1$, otherwise. For $k=0\mod p$, we set $\big(\frac{k}{p}\big)=0$. The following lemma will be useful for us.

\begin{lemma}\label{lem:legendre_sum}
    For any odd prime number $p$ and any $\alpha,\beta,\gamma\in\mathbb Z$ with $\alpha\neq 0\mod p$,
    \begin{align*}
        \sum_{m=0}^{p-1}\Big(\frac{\alpha m^2+\beta m+\gamma}{p}\Big)=\begin{cases}
            -\Big(\frac{\alpha}{p}\Big),\quad\text{if}\; \beta^2-4\alpha\gamma \neq 0\mod p,\\
            (p-1)\Big(\frac{\alpha}{p}\Big),\quad\text{otherwise}. 
        \end{cases}
    \end{align*}
\end{lemma}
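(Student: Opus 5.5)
The plan is to reduce to the case $\alpha=1$, complete the square, and then count solutions of $y^2 - x^2 = c$.

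\emph{Normalization.} Since $\alpha\neq 0\bmod p$ and the Legendre symbol is completely multiplicative, multiply the argument by $4\alpha$. As $\big(\frac{4\alpha}{p}\big)=\big(\frac{\alpha}{p}\big)$ and $\big(\frac{\alpha}{p}\big)^2=1$, each summand satisfies
\[
\Big(\frac{\alpha m^2+\beta m+\gamma}{p}\Big)=\Big(\frac{\alpha}{p}\Big)\Big(\frac{(2\alpha m+\beta)^2-D}{p}\Big),\qquad D:=\beta^2-4\alpha\gamma .
\]
The map $m\mapsto 2\alpha m+\beta$ is a bijection of $\Z/p\Z$, because $p$ is odd and $\alpha\neq0\bmod p$. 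Hence the sum equals $\big(\frac{\alpha}{p}\big)S(D)$, where
\[
S(D):=\sum_{x=0}^{p-1}\Big(\frac{x^2-D}{p}\Big).
\]
It remains to show that $S(D)=p-1$ if $D\equiv 0$ and $S(D)=-1$ otherwise.

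\emph{Case $D\equiv 0$.} Here $\big(\frac{x^2}{p}\big)=1$ for every $x\neq 0$, and the term $x=0$ contributes $0$. So $S(0)=p-1$.

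\emph{Case $D\not\equiv 0$.} Use $\big(\frac{t}{p}\big)=N(t)-1$, where $N(t)=\#\{y\in\Z/p\Z: y^2=t\}$; this holds for every $t$, including $t=0$. Then
\[
S(D)=\#\{(x,y): y^2-x^2=D\}-p .
\]
Substitute $u=y-x$ and $v=y+x$, which is invertible since $2$ is a unit mod $p$. The equation becomes $uv=D$. Because $D\neq 0$, $u$ ranges over the $p-1$ nonzero residues and $v=D/u$ is then determined. So there are $p-1$ solutions, and $S(D)=p-1-p=-1$.

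Multiplying by $\big(\frac{\alpha}{p}\big)$ gives the lemma. The only step requiring care is the bookkeeping that $\big(\frac{t}{p}\big)=N(t)-1$ also at $t=0$, since $N(0)=1$ and $\big(\frac{0}{p}\big)=0$. An alternative would be Jacobi-sum arguments, but the counting argument above is elementary.
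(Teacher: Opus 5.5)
Your proof is correct and follows essentially the same route as the paper: you complete the square (multiplying by $4\alpha$ where the paper uses $4\alpha^2$) to reduce to $\sum_x \big(\frac{x^2-D}{p}\big)$, then handle $D\not\equiv 0$ by counting the $p-1$ solutions of a difference of squares through the factorization $(y-x)(y+x)$. One harmless slip: $y^2=x^2-D$ corresponds to $y^2-x^2=-D$, not $D$, but since $-D\not\equiv 0$ the count $p-1$ is unaffected.
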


\begin{proof}
Using the multiplicativity of the Legendre symbol and the obvious fact that $\big(\frac{4a^2}{p}\big)=1$, we obtain
\begin{align*}
        \sum_{m=0}^{p-1}\Big(\frac{\alpha m^2+\beta m+\gamma}{p}\Big)&=\sum_{m=0}^{p-1}\Big(\frac{4\alpha^2(\alpha m^2+\beta m+\gamma)}{p}\Big)\\
        &=\sum_{m=0}^{p-1}\Big(\frac{\alpha}{p}\Big)\Big(\frac{(2\alpha m+\beta)^2-(\beta^2-4\alpha\gamma)}{p}\Big)\\
        &=\Big(\frac{\alpha}{p}\Big)\sum_{m=0}^{p-1}\Big(\frac{m^2-(\beta^2-4\alpha\gamma)}{p}\Big).
    \end{align*}
    If $\beta^2-4\alpha\gamma=0\mod p$, the statement follows immediately. Otherwise, denote $r:=\beta^2-4\alpha\gamma\neq 0\mod p$ and observe that for a fixed $m$, the number of solutions $x \in [0,p-1]$ to $x^2=m^2-r \mod p$ is $1+\big(\frac{m^2-r}{p}\big)$. On the other hand, the number of pairs $(m,x) \in [0, p-1]^2$ satisfying $m^2-x^2=(m-x)(m+x)=r$ is $p-1$, whence 
    \begin{align*}
        \sum_{m=0}^{p-1}\Big(\frac{m^2-r}{p}\Big)=\sum_{m=0}^{p-1}\Big(\Big(\frac{m^2-r}{p}\Big)+1\Big)-p=(p-1)-p=-1,
    \end{align*}
and the proof is complete.
\end{proof}

We recall that, for a finite abelian group $\mathcal G$, a (multiplicative) character is a homomorphism $\chi:\mathcal G\to \mathbb C^*$, while being primitive means, roughly speaking, that it cannot be appropriately factorized (we refer to \cite[\S3.3]{iwaniec_kowalski} for the precise definition and further discussion of primitive characters). Considered on the multiplicative group $\mathbb Z_p^*$ of nonzero residues modulo $p$, the Legendre symbol is a particular case of characters. Moreover, for an odd prime $p$, the only primitive character modulo $p$ is precisely the Legendre symbol (see \cite[p. 46]{iwaniec_kowalski}). 

In what follows we will need the following lemma.
\begin{lemma}\label{lem_char}
    For any primitive character $\chi\mod p,\;p\geq 3$, any positive integer $M$ and $m$, we have
\begin{align*}
\sup_{\alpha\in\mathbb{R}}\Big|\sum_{n=M+1}^{M+m}\chi(n)e(\alpha n)\Big|\lesssim \sqrt{p}\log p+\frac{m}{\sqrt{p}},
\end{align*}
where $e(x):=e^{2\pi i x}$.
\end{lemma}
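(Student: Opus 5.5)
This is the classical Pólya--Vinogradov estimate twisted by an additive character. I would prove it by completing the sum modulo $p$ and using Gauss sums. Write $m = qp + r$ with $0\le r<p$ and split the interval $[M+1,M+m]$ into $q$ complete blocks of length $p$ plus one incomplete block of length $r<p$. The incomplete block is handled by the first step below. The $q\le m/p$ complete blocks contribute the $m/\sqrt p$ term.

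\textbf{Complete blocks.} On a block $\{N+1,\dots,N+p\}$ substitute $n=N+j$. By $p$-periodicity of $\chi$ the block sum equals $e(\alpha N)\sum_{j=1}^{p}\chi(N+j)e(\alpha j)$. Expand $\chi$ in additive characters: for primitive $\chi$ we have
\[
\chi(n)=\frac{1}{\tau(\bar\chi)}\sum_{h=0}^{p-1}\bar\chi(h)e(hn/p),\qquad |\tau(\bar\chi)|=\sqrt p .
\]
This identity holds also for $p\mid n$, and it is where primitivity enters. Hence any sum $\sum_{n\in I}\chi(n)e(\alpha n)$ over an interval $I$ equals
\[
\tau(\bar\chi)^{-1}\sum_{h}\bar\chi(h)\sum_{n\in I}e\big((\alpha+h/p)n\big).
\]
For an interval of length $p$, the inner geometric sum is bounded by $\min\big(p,\ \|\alpha+h/p\|^{-1}\big)$, where $\|\cdot\|$ is the distance to the nearest integer. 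As $h$ varies, the points $\alpha+h/p$ are $1/p$-spaced modulo $1$. At most two of them are within $1/p$ of an integer, and each of these contributes at most $p$. The rest contribute $\sum_{k\ge1} p/k \lesssim p\log p$. So each complete block is $\lesssim p^{-1/2}(p+p\log p)$, and $q$ blocks give $\lesssim (m/p)\sqrt p\log p$. That is worse than $m/\sqrt p$ by a factor $\log p$.

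\textbf{Fixing the log.} To remove the logarithm on complete blocks, compute exactly instead. The full-period sum $\sum_{j=0}^{p-1}e\big((\alpha+h/p)j\big)$ equals $\frac{1-e(p\alpha)}{1-e(\alpha+h/p)}$. Multiply by $\bar\chi(h)$ and sum in $h$; the resulting expression is
\[
\frac{1-e(p\alpha)}{\tau(\bar\chi)}\sum_h \frac{\bar\chi(h)}{1-e(\alpha+h/p)} .
\]
I would bound this with Cauchy--Schwarz together with
\[
\sum_h \big|1-e(\alpha+h/p)\big|^{-2}\approx \frac{p^2}{|1-e(p\alpha)|^2}.
\]
This is a standard identity: the sum of $\csc^2$ over equispaced points equals $p^2\csc^2(\pi p\alpha)$. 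The result is a block bound $\lesssim p^{-1/2}\cdot|1-e(p\alpha)|\cdot\sqrt p\cdot p/|1-e(p\alpha)|\cdot p^{-1/2}$. Simpler still, apply Parseval to the finite Fourier transform in $j$, which gives that the block sum is $\lesssim\sqrt p$ uniformly in $\alpha$. Summing over the $q$ blocks then gives $q\sqrt p\le m/\sqrt p$.

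\textbf{Main obstacle.} The only delicate point is this uniform $O(\sqrt p)$ bound for a complete period with arbitrary real $\alpha$. My first attempt would be the Parseval/Cauchy--Schwarz computation with the cosecant identity. If that gets messy, I would fall back on the exact geometric-sum expression above and pair the $h$ terms.
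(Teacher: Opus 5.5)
Your plan has a genuine gap at exactly the step you flag as delicate. The claimed estimate $\sup_{\alpha\in\mathbb R}\big|\sum_{j=1}^{p}\chi(N+j)e(\alpha j)\big|\lesssim\sqrt p$ for a complete period is false. For $\chi=(\frac{\cdot}{p})$ and $N=0$ this quantity is the sup-norm of the Fekete polynomial. H.~L.~Montgomery (Acta Arith.\ 37, 1980) proved $\max_{\alpha}\big|\sum_{n=1}^{p-1}\big(\frac np\big)e(n\alpha)\big|\gg\sqrt p\log\log p$.

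Neither of your justifications works. The identity $\sum_h|1-e(\alpha+h/p)|^{-2}=p^2/|1-e(p\alpha)|^2$ is exact, and Cauchy--Schwarz with it gives
\[
\frac{|1-e(p\alpha)|}{\sqrt p}\cdot\sqrt{p-1}\cdot\frac{p}{|1-e(p\alpha)|}\le p,
\]
which is only the trivial bound. The final factor $p^{-1/2}$ in your chain has no source. Parseval gives $\int_0^1|B(\alpha)|^2\,d\alpha=p-1$ for the block sum $B(\alpha)$, which is an average over $\alpha$, not a supremum. So bounding the $q\approx m/p$ complete blocks separately and adding them by the triangle inequality can never beat $q\sup_\alpha|B(\alpha)|\gg(m/\sqrt p)\log\log p$. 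This route cannot reach $m/\sqrt p$.

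The fix is not to split at all. Your first step, run on the whole interval $[M+1,M+m]$, already proves the lemma, and it is precisely the paper's proof. After expanding $\chi$, each inner geometric sum is at most $\min\big(m,\frac{1}{2\|\alpha+h/p\|}\big)$. At most two $h$ have $\|\alpha+h/p\|<1/p$, and these contribute at most $m$ each. The others contribute $\lesssim\sum_{k\le p}p/k\lesssim p\log p$, regardless of $m$. This yields $p^{-1/2}(2m+p\log p)$. Because $m$ enters through only two terms, no logarithm attaches to $m/\sqrt p$.

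The block decomposition loses exactly this. By periodicity the complete blocks sum to $B(\alpha)\sum_{k=0}^{q-1}e(kp\alpha)$. Your exact formula for $B(\alpha)$ in fact gives $|B(\alpha)|\lesssim\sqrt p+\|p\alpha\|\sqrt p\log p$. Hence $B$ exceeds $C\sqrt p$ only where $\|p\alpha\|\gtrsim 1/\log p$, and there $\big|\sum_{k<q}e(kp\alpha)\big|\le 1/(2\|p\alpha\|)$ is far smaller than $q$. Replacing that geometric factor by $q$ throws away the cancellation you need. If you keep the pairing, you get $q\sqrt p+\sqrt p\log p$, which simply reassembles the one-step argument.
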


\begin{proof}
    Writing the Fourier expansion (see \cite[(3.12)]{iwaniec_kowalski}) for $\chi$, we get
    \begin{align*}
        \chi(n)=\frac{1}{\tau(\overline{\chi})}\sum_{t\mod p}\overline{\chi}(t)e\Big(\frac{tn}{p}\Big),
    \end{align*}
       where $\tau(\overline{\chi})=\sum_{k=0}^{p-1}\overline{\chi}(k)e(k/p)$ is the Gauss sum for $\overline{\chi}$ and $|\tau(\overline{\chi)}|=\sqrt{p}$ (see \cite[(3.14)]{iwaniec_kowalski}). Thus, denoting for a moment by $\|s\|$ the distance from the number $s\in\mathbb R$ to the nearest integer, we arrive at
    \begin{align*}
        \Big|\sum_{n=M+1}^{M+m}\chi(n)e(\alpha n)\Big|&=\frac{1}{\sqrt{p}}\Big|\sum_{n=M+1}^{M+m}\sum_{t\mod p}\overline{\chi}(t)e\Big(\frac{tn}{p}+\alpha n\Big)\Big|\\
        &\leq \frac{1}{\sqrt{p}}\sum_{t\mod p}\min \Big(m, \frac{1}{2\|\alpha+t/p\|}\Big)\\
        &\lesssim \sqrt{p}\log p+\frac{m}{\sqrt{p}},
    \end{align*}
    where we used the fact that $|\chi(a)|=1$ and the usual estimate for the geometric progressions of the form $\sum_{n=r+1}^{R}e(sn)$.
\end{proof}

\section{Construction of the sign pattern and its properties}

It is well known that if $F$ has small $\ell^1$-norm, then $a_F=1 + O(\|F\|_{\ell^1}^2)$ and $b_F=\widehat{F} + O(\|F\|_{\ell^1}^3)$, where $\widehat{F}$ is the linear Fourier transform of the sequence. The main idea for providing the announced counterexamples is the construction of an $F$ which has large $\ell^1$-norm and yet the same approximations for $a$ and $b$ continue, thanks to cancellations in the higher-order  terms of $a$ and $b$. 

Let $p$ be an arbitrary large prime number, let $s>0$ be an integer, and let $\lil >0$ be small. We set

\begin{equation*}
    H_k=H_k(p,s,\lil ):=\begin{cases}
        \lil \Big(\frac{k}{p}\Big),\quad \text{if}\;p\nmid k,\;0\leq k\leq sp-1,\\
        0,\quad\text{otherwise} \, .
    \end{cases}
\end{equation*}

\begin{proposition}\label{prop:main construction}
We have
\begin{align*}
    \|b_H - \widehat{H} \|_{A_1} &\lesssim \lil^3 (sp)^2 + \lil^2 (p+s) sp e^{\lil  s p},\\
    \|a_H- 1\|_{A_1} &\lesssim \lil^2 (p+s) sp  e^{\lil s p}.
\end{align*}
\end{proposition}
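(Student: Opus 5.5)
Expand the finite product defining $G_H$ into its multilinear series. Write the $SU(2)$ transfer matrix as $(1+|H_k|^2)^{-1/2}(I + H_k z^k E + \ldots)$. Since $|H_k|\le \lil$ and there are at most $sp$ nonzero terms, the normalizing product $\prod (1+|H_k|^2)^{-1/2}$ equals $1+O(\lil^2 sp)$. Its contribution to $a-1$ is therefore $O(\lil^2 sp)$, which is admissible. It contributes to $b-\widehat H$ only after multiplying $\widehat H$, whose $A_1$ norm is $\le \lil sp$; this gives $O(\lil^3(sp)^2)$.

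The unnormalized product expands as follows. The $(1,1)$ entry is the sum over even $n\ge 2$ of
\[
(-1)^{n/2}\sum_{k_1<\dots<k_n} H_{k_1}\bar H_{k_2}H_{k_3}\cdots \bar H_{k_n}\, z^{k_1-k_2+k_3-\dots-k_n},
\]
and the $(1,2)$ entry is the analogous sum over odd $n$, with $n=1$ giving $\widehat H$. Since $H$ is real, the $n$-th term has $A_1$ norm at most $\sum_m |c_n(m)|$, where
\[
c_n(m)=\sum_{\substack{k_1<\dots<k_n\\ \sum(-1)^{i+1}k_i=m}}\prod_i H_{k_i}.
\]

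For $n=2$, the coefficient at frequency $-d$ (with $d=k_2-k_1>0$) is $\lil^2\sum_k (\tfrac{k}{p})(\tfrac{k+d}{p}) = \lil^2\sum_k (\tfrac{k(k+d)}{p})$, summed over $0\le k<sp-d$. Split the range into complete blocks of length $p$, each contributing $-1$ or $p-1$ by Lemma \ref{lem:legendre_sum}. The boundary incomplete block contributes at most $\sqrt p\log p$ by Pólya–Vinogradov, or can be bounded crudely by $p$. When $p\nmid d$ the complete blocks give $O(s)$; since there are $O(sp)$ such $d$, the total is $\lil^2 (s+p)sp$. The $d$ with $p\mid d$ give coefficients up to $\lil^2 sp$, but there are only $s$ such $d$, so they contribute $\lil^2 s\cdot sp$. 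Altogether the $n=2$ term is $\lesssim \lil^2(p+s)sp$.

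For $n=3$ I will use the crude bound $\binom{sp}{3}\lil^3\le \lil^3(sp)^2\cdot sp$. That is too big, so instead I group the terms as the pair $(k_1,k_2)$ times a free $k_3$. Fix the gap $d=k_2-k_1$ and the frequency. The inner sum over the pair then becomes a shifted Legendre-product sum of the same type, so the same $n=2$ estimate gives
\[
\sum_m |c_3(m)|\le \|\widehat H\|_{A_1}\cdot\lil^2(p+s)sp .
\]
This is still off, so I keep the crude $\lil^3(sp)^2$ bound only for the piece that the normalization already produced. The main obstacle is the general $n\ge3$ term: I want
\[
\sum_m|c_n(m)|\lesssim \lil^2(p+s)sp\cdot\frac{(\lil sp)^{n-2}}{(n-2)!},
\]
since summing over $n$ then yields the factor $e^{\lil sp}$. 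The plan is to isolate one adjacent pair $(k_{j},k_{j+1})$ exhibiting the Legendre cancellation of Lemma \ref{lem:legendre_sum} and to bound the remaining $n-2$ ordered indices trivially by $(\lil sp)^{n-2}/(n-2)!$. The ordering constraint couples the pair to its neighbours, but this only truncates the summation range of the pair to an interval. Incomplete intervals are handled by Lemma \ref{lem_char} with the twist $e(\alpha n)$ encoding the frequency, and their cost is absorbed into the $(p+s)$ factor. Doing this carefully for $b$ leaves the $n=3$ main term $\lil^3(sp)^2$ together with a remainder of the stated form, which finishes both estimates.
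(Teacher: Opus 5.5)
Your plan is correct and is essentially the paper's proof. The paper isolates the first pair $(j_1,j_2)$ with gap $r$, so the ordering only truncates it to an initial segment below $j_3$. It bounds $\sum_r|R(j_3,r)|\lesssim\lil^2(p+s)j_3$ using Lemma~\ref{lem:legendre_sum} on complete blocks and the trivial bound $p$ on the incomplete one (so Lemma~\ref{lem_char}, which concerns linear sums rather than correlation sums, is not needed), then bounds the other $m-2$ indices trivially to get $\|T_m(H)\|_{A_1}\lesssim\lil^m(p+s)(sp)^{m-1}/(m-2)!$.

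One bookkeeping correction: your $n=3$ bound $\lil^3(p+s)(sp)^2$ is not ``off''. It is exactly the $n=3$ case of your target and is absorbed into $\lil^2(p+s)sp\,e^{\lil sp}$. The $\lil^3(sp)^2$ term in the statement comes only from $|C(H)-1|\,\|\widehat H\|_{A_1}$, and the cubic term itself is in general not $O(\lil^3(sp)^2)$, so your closing sentence misattributes it.
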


For the proof of Proposition \ref{prop:main construction}, we make use of the multilinear expansion for $(a_H,b_H)$ (see e.g. \cite[Section 5]{QSP_NLFA}). Namely, since $H$ is a real summable sequence, then its NLFT data $(a_H,b_H)$ is given by the following absolutely convergent series 
\begin{equation}\label{multi_bG_0}
b_H = C(H) \sum_{n=0}^{\infty} (-1)^n T_{2n+1} (H) \, , \quad a_H = C(H) \sum\limits_{n=0} ^{\infty} (-1)^n T_{2n} (H) \, ,
\end{equation}
where
$$
T_{m} (H) = \sum_{j_1<\dots < j_{m}} \prod_{k=1}^{m} z^{(-1)^{k+1} j_k} \prod_{k=1}^m H_{j_k} \, 
$$
and 
\begin{align}\label{c(f)_0}
C(H) = (1+\lil^2)^{\frac{sp}{2}} \, .
\end{align}
While $T_0 (H)$ and $T_1 (H)$  appear on the left sides of Theorem \ref{prop:main construction}, the multilinear terms $(T_m)_{m \geq 2}$ of $a_H$ and $b_H$ are rendered small by crucial cancellation which in turn is obtained through summation properties of the Legendre symbol (Lemma \ref{lem:legendre_sum}). In this vein, in the upcoming lemma we estimate a sum of products of $H$'s using our choice of signs.

For positive integers $J>r$, let
\begin{equation*}
    R(J,r) := \sum_{j=0}^{J-r} H_{j} H_{j+r} \, .
\end{equation*}

\begin{lemma}\label{lem:main ce estimate - quadratic residues}
We have
\begin{align*}
    |R(J,r)|\leq \begin{cases}
        \lil^2 \Big(p+2\Big\lceil \frac{J}{p}\Big\rceil \Big), \quad \text{if}\;p\nmid r,\\
     \lil^2 J,\quad\text{otherwise}.
    \end{cases}
\end{align*}
\end{lemma}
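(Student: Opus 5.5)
The second case is trivial. The sum $R(J,r)$ has at most $J-r+1\le J$ terms, and each term has absolute value at most $\lil^2$, so $|R(J,r)|\le \lil^2 J$.

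For the case $p\nmid r$, we strip off $\lil^2$ and write $R(J,r)=\lil^2\sum_{j}\big(\frac{j}{p}\big)\big(\frac{j+r}{p}\big)$. Here $j$ runs over $0\le j\le J-r$, with both $j$ and $j+r$ in $[0,sp-1]$, and we may drop the condition $p\nmid j, j+r$, since the Legendre symbol vanishes there anyway. The range constraint $j+r\le sp-1$ just shrinks the interval. So the index set is an interval $[0,L]$ of integers with $L+1\le J$.

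By multiplicativity, $\big(\frac{j}{p}\big)\big(\frac{j+r}{p}\big)=\big(\frac{j^2+rj}{p}\big)$. This is the Legendre symbol of a quadratic polynomial with $\alpha=1$, $\beta=r$, $\gamma=0$, whose discriminant $r^2\ne 0 \bmod p$ because $p\nmid r$. Lemma \ref{lem:legendre_sum} then says that the sum of this symbol over any complete residue system mod $p$ equals $-\big(\frac{1}{p}\big)=-1$. The summand is $p$-periodic in $j$, so any block of $p$ consecutive integers is a complete residue system.

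We split $[0,L]$ into $\lfloor (L+1)/p\rfloor$ full blocks of length $p$, plus one incomplete tail of length less than $p$. Each full block contributes exactly $-1$, so these contribute at most $\lfloor J/p\rfloor\le \lceil J/p\rceil$ in absolute value. The tail has fewer than $p$ terms, each of absolute value at most $1$, so it contributes at most $p$. Altogether $|R(J,r)|\le \lil^2(p+\lceil J/p\rceil)$, which is even better than the claimed bound with the factor $2$; that factor presumably absorbs boundary or bookkeeping slack in the original argument.

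The only point needing care is that the summation range is an interval determined jointly by $J$, $r$ and the support $[0,sp-1]$ of $H$, rather than all of $\mathbb Z$. Since it is still an interval of consecutive integers, the block decomposition goes through unchanged.
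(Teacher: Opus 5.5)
Your proof is correct and follows the paper's argument: use periodicity to split the summation range into full periods plus a tail of fewer than $p$ terms, then evaluate each full period with Lemma~\ref{lem:legendre_sum}. The differences are minor. You apply the lemma directly to $j^2+rj$ (discriminant $r^2$) instead of substituting $m=jr^{-1}$ to reduce to $m(m+1)$, which gives exactly $-1$ per period rather than the paper's cruder bound of $2$. You also handle the truncation of $H$ to $[0,sp-1]$ more explicitly than the paper does.
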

\begin{proof}
For simplicity, denote $t:=\lceil \frac{J}{p}\rceil$.

First, assume $p \nmid r$. By $p$-periodicity of $H_j$ and then multiplicativity of the Legendre symbol, we have 
\begin{align}
    \lil^{-2} R(J,r) &\leq p+\lil^{-2}\Big|\sum_{j=0}^{p\lfloor (J-r)/p \rfloor -1} H_{j}H_{j+r}\Big|  \leq p+t \lil^{-2} \Big| \sum_{j=0}^{p-1} H_{j} H_{j+r} \Big| \nonumber\\
    &\lesssim p+t \Big| \sum_{1\leq j<p,\;p\nmid j+r}\Big(\frac{j}{p}\Big) \Big( \frac{j+r}{p} \Big) \Big| \nonumber\\
    &= p+t \Big| \sum_{1\leq j<p,\;p\nmid j+r} \Big( \frac{r^2 m(j) (m(j)+1)}{p}\Big)\Big|\label{eq:m_sub} \, ,
    \end{align}
    where we set $m(j):= j r^{-1} \mod p$, which is well-defined since $p \nmid j$ and $p \nmid j+r$. Reindexing the sum and then applying Lemma \ref{lem:legendre_sum} to the polynomial $x(x+1)$, whose discriminant is nonvanishing $\mod p$, we obtain that the right-hand side of \eqref{eq:m_sub} equals 
    \begin{align*}
    p+t\Big|\sum_{1\leq m<p-1}\Big(\frac{m(m+1)}{p}\Big)\Big| &\leq p+t+t\Big|\sum_{1\leq m<p}\Big(\frac{m(m+1)}{p}\Big)\Big|\\
    &=p+t+t\Big|-\Big(\frac{1}{p}\Big)\Big|=p+2t \, .
    \end{align*}

For the case of $p\mid r$, we use the crude estimate 
\begin{equation*}
    \lil^{-2}R(J,r)=\lil^{-2}\sum_{j=0}^{J-r} |H_{j} H_{j+r}|  = \sum_{j=0}^{J-r} 1 \leq J. \qedhere
\end{equation*}
\end{proof}

\begin{proof}[Proof of Proposition \ref{prop:main construction}.]
Recalling the multilinear expansions \eqref{multi_bG_0} of  $a_H$ and $b_H$, we first note that \eqref{c(f)_0} yields 
\begin{align}\label{c(f)}
C(H) = 1 + O(\lil^2 sp).
\end{align}
As for estimating the multilinear terms, for each $m \geq 2$, we write
\begin{align*}
    T_{m} (H) = \sum_{0\leq j_3<\dots < j_{m}\leq sp-1 } \prod_{k=3}^{m} z^{-(-1)^k j_k}\prod_{3\leq k \leq m } H_{j_k} \sum_{r=1}^{j_3-1} R(j_3,r) z^{-r} \, .
\end{align*}
Using Lemma \ref{lem:main ce estimate - quadratic residues}, we estimate
\begin{align*}
\| T_{m}(H)\|_{A_1} &\leq \sum_{0\leq j_3<\dots < j_{m}\leq sp-1 } \lil^{m-2} \sum_{r=1}^{j_3-1} |R(j_3,r)|\\
&\lesssim \sum_{0\leq j_3<\dots < j_{m}\leq sp } \lil^{m-2} \lil^2 (p+s)j_3 \leq \lil^{m} \frac{(ps)^{m-1}}{(m-2)!} (p+s) \, .
\end{align*}
Using that $T_1 (H) = \widehat{H}$, we then have by \eqref{multi_bG_0} and \eqref{c(f)}
\begin{align*}
\|b_H- \widehat{H}\|_{A_1} &\leq |C(H) -1| \|T_1 (H)\|_{A_1} + \sum\limits_{n \geq 1} \|T_{2n+1} (H)\|_{A_1}\\
&\lesssim  \lil^3 (ps)^2 + \lil^2 (p+s) ps  \sum\limits_{n \geq 1} \frac{(\lil ps)^{2n-1}}{(2n-1)!} \\
&\leq \lil^3 (ps)^2 + \lil^2 (p+s)ps e^{\lil ps} \, .
\end{align*}
Similarly,
\begin{align*}
\|a_H-1\|_{A_1} &\leq |C(H) -1| + \sum\limits_{n \geq 1} \|T_{2n} (H)\|_{A_1} \\
&\lesssim \lil^2 ps +  \lil^2 (p+s)ps  \sum\limits_{n \geq 1} \frac{(\lil ps)^{2n}}{(2n)!} \lesssim  \lil ^2 (p+s)ps e^{\lil ps} \, . 
\end{align*}
\end{proof}

\section{Setting up the potentials}\label{sec:defn_potentials}

Here we set up the potentials that will be used in the proofs of the main theorems and derive the main bound for proving outerness of $a$ in these theorems.

For $0<\eta<1$, define the sequence $F'=F'(\eta)$ by
$$
F'_k := \begin{cases}
    1-\eta, \text{ if } k = -1 \text{ or } k=-p-1 ,\\
    0,\text{ otherwise},
\end{cases}
$$
and $F=F (p,\eta,\lil) := F'(\eta) + H(p,p,\lil)$, that is,
\begin{align*}
F_k: = \begin{cases}
    1-\eta,\text{ if } k= -1 \text{ or } k=-p-1,  \\
    H_k (p,p,\lil), \text{ otherwise}.
\end{cases}
\end{align*}
In the next section, we will choose parameters $\epsilon,p,\eta$ in such a way that $F$ and $F'$ will give the potentials satisfying the statements of the main theorems.

We have
$$
\begin{pmatrix}
    a_{F'} & b_{F'} \\
    -b_{F'}^* & a_{F'}
\end{pmatrix} = \frac{1}{1+(1-\eta)^2} \begin{pmatrix}
    1- (1-\eta)^2z^{-p} & (1-\eta)(z^{-1} + z^{-p-1}) \\
    -(1-\eta)(z+z^{p+1}) & 1-(1-\eta)^2z^p
\end{pmatrix}
$$
and
$$
\begin{pmatrix}
    a_F & b_F \\
    -b_F^* & a_F
\end{pmatrix} = \begin{pmatrix}
    a_{F'} & b_{F'} \\
    -b_{F'}^* & a_{F'}
\end{pmatrix} \begin{pmatrix}
    a_H & b_H \\
    -b_H^* & a_H^*
\end{pmatrix} \, .
$$
Hence,
\begin{equation}\label{eq:prod F' and H}
   \begin{cases}
       a_F= a_{F'} a_H -b_{F'}b^*_H \\
       b_F = a_{F'} b_H + b_{F'} a^*_H \, .
   \end{cases}
\end{equation}
The following lemma will ensure outerness of the function $a_F $ for appropriate choices of parameters.
\begin{lemma}\label{lem:outerness inequality}
For any $z\in \D^*$,
\begin{equation*}
    |a_F (z)| - \eta \gtrsim - (\epsilon p^{\frac{3}{2}} + \epsilon^3 p^4 + \epsilon^2 p^3 e^{\epsilon p^2} ) \, .
\end{equation*}
\end{lemma}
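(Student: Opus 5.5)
The plan is to use the product formula \eqref{eq:prod F' and H}, $a_F = a_{F'}a_H - b_{F'}b_H^*$. I will bound $|a_{F'}|$ from below by $\eta$ and show that the other contributions are small, uniformly on $\D^*$. Since $F'$ and $H$ are finitely supported, every function involved is a Laurent polynomial. Each will turn out to have only nonpositive powers of $z$, so it is analytic on $\D^*$ (including $\infty$). By the maximum principle, its modulus on $\D^*$ is then bounded by its supremum on $\T$, and in particular by its $A_1$ norm. With $s=p$, I will show
\[
|a_F(z)| \ge |a_{F'}(z)| - |a_{F'}(z)|\,|a_H(z)-1| - |b_{F'}(z)|\,|b_H^*(z)|,
\]
and bound each term in turn.

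\emph{Main term.} For $z\in\D^*$ we have $|z^{-p}|\le 1$, so
\[
|a_{F'}(z)| \ge \frac{1-(1-\eta)^2}{1+(1-\eta)^2} = \frac{2\eta-\eta^2}{2-2\eta+\eta^2}.
\]
This is $\ge \eta$ because $2-\eta \ge 2-2\eta+\eta^2$ is equivalent to $\eta\ge \eta^2$. The same formulas give $|a_{F'}|\le 1$ and $|b_{F'}(z)| \le (1-\eta)(|z|^{-1}+|z|^{-p-1})/(1+(1-\eta)^2)\le 1$ on $\D^*$.

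\emph{The $a_H$ term.} In $T_{2n}(H)$ the exponent of $z$ is $j_1-j_2+\dots+j_{2n-1}-j_{2n}$ with $j_1<\dots<j_{2n}$, which is negative. Hence $a_H-1$ contains only nonpositive powers of $z$, and on $\D^*$
\[
|a_H(z)-1|\le \|a_H-1\|_{A_1}\lesssim \epsilon^2 p^3 e^{\epsilon p^2},
\]
by Proposition \ref{prop:main construction} with $s=p$.

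\emph{The $b_H$ term.} In $T_{2n+1}(H)$ the exponent $j_1-j_2+\dots+j_{2n+1}$ lies in $[0,p^2-1]$. So $b_H$ has only nonnegative frequencies, and $b_H^*$ has only nonpositive ones. Thus $\sup_{\D^*}|b_H^*| = \sup_{\T}|b_H|$, and
\[
\sup_{\T}|b_H| \le \sup_{\T}|\widehat H| + \|b_H-\widehat H\|_{A_1}.
\]
The second summand is $\lesssim \epsilon^3p^4+\epsilon^2p^3e^{\epsilon p^2}$ by Proposition \ref{prop:main construction}.

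The main obstacle is the first summand, because the trivial bound $\|\widehat H\|_{A_1}\approx \epsilon p^2$ is far too large. Here the cancellation of the Legendre symbol is needed. Up to the sign convention, $\widehat H(z)=\epsilon\sum_{n=0}^{p^2-1}\big(\tfrac{n}{p}\big)e(\alpha n)$, the terms with $p\mid n$ vanishing automatically. Lemma \ref{lem_char} applies with $\chi$ the Legendre symbol (the primitive character mod $p$) and $m=p^2$. It gives
\[
\sup_{\T}|\widehat H|\lesssim \epsilon\Big(\sqrt p\log p + \frac{p^2}{\sqrt p}\Big)\lesssim \epsilon p^{3/2}.
\]

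Combining the three estimates,
\[
|a_F(z)| \ge \eta - C\big(\epsilon p^{3/2}+\epsilon^3p^4+\epsilon^2p^3e^{\epsilon p^2}\big)
\]
for all $z\in\D^*$, which is the claim.
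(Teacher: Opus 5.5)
Your proof is correct and follows essentially the same route as the paper. You use the product formula \eqref{eq:prod F' and H}, the bounds $|a_{F'}|,|b_{F'}|\le 1$ and $|a_{F'}|\ge\eta$, Proposition \ref{prop:main construction} for $a_H-1$ and $b_H-\widehat H$, and Lemma \ref{lem_char} for $\sup_{\T}|\widehat H|\lesssim \epsilon p^{3/2}$. The only difference is that the paper estimates $|a_F-a_{F'}|$ on $\T$ and applies the maximum principle once to $a_F-a_{F'}$, whereas you apply it factor by factor after checking that $a_H-1$ and $b_H^*$ have only nonpositive frequencies, which you do correctly.
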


\begin{proof}
For $z\in \T$, by \eqref{eq:prod F' and H} we can estimate
\begin{align*}
|a_F (z) - a_{F'}(z)| &\leq | a_{F'}(z) (a_H(z) - 1) | + |b_{F'}(z) b^*_H (z)|\\
&\leq |a_H(z)-1| + |\widehat{H}(z)| + |\widehat{H}(z)- b_H(z)|\\
&\leq \| a_H-1\|_{A_1}+\|\widehat{H}(z)- b_H(z)\|_{A_1} + \| \widehat{H}\|_{L^\infty(\T)}\, ,
\end{align*}
using Proposition \ref{prop:main construction}, we continue
\begin{align*}
&\lesssim \epsilon \left\| \sum_{k=0}^{p^2-1} \Big( \frac{k}{p} \Big) z^{k+1}\right\|_{L^\infty(\T)} + \epsilon^3 p^4 + 2 \epsilon^2 p^3 e^{\epsilon p^2}  \\
&= \epsilon \Big|\sum_{k=0}^{p^2-1} \Big( \frac{k}{p} \Big) z^k \Big| +  \epsilon^3 p^4 + 2\epsilon^2 p^3 e^{\epsilon p^2} \, .
\end{align*}
Finally, by lemma \ref{lem_char}, we have
$$
\lesssim \epsilon p^{\frac{3}{2}} + \epsilon^3 p^4 + \epsilon^2 p^3 e^{\epsilon p^2} \, ,
$$
uniformly for all $z\in \T$. By the maximum principle, the same bound also holds for all $z\in \D^*$. Thus, for all $z\in \D^*$ and some absolute constant $c$, we conclude
$$
|a_F(z)| \geq |a_{F'}(z)| - c \Big( \epsilon p^{\frac{3}{2}} + \epsilon^3 p^4 + \epsilon^2 p^3 e^{\epsilon p^2} \Big) \, .
$$
As $|a_{F'}(z)| \geq \frac{\eta (2-\eta)}{2-2\eta + \eta^2}\geq \eta$, we finish the estimate
$$
\geq \eta - c \Big( \epsilon p^{\frac{3}{2}} + \epsilon^3 p^4 + \epsilon^2 p^3 e^{\epsilon p^2} \Big) \, . 
$$

\end{proof}

\section{Negative results}

\subsection{Proof of Theorem \ref{prop:lipschitz l1 A1}}

Fix $\Lambda >0$. Let $M\geq 1$ be a large parameter and $\delta \in (0,1)$ a small parameter, both of which will be determined later. Define $\lil = \lil (M, \delta)$ and $\eta = \eta (M,\delta)$ by
\begin{align*}
    \lil:=\frac{M^{-3}\delta^2}{e^{2M}}\,,\quad \eta:= \delta^{1/4} \, .
\end{align*}
Let $p = p(M, \delta)$  be a prime satisfying
$$
\frac{M^2e^M}{2\delta}\leq p < \frac{M^2e^M}{\delta} \, ;
$$
 which exists by Bertrand's postulate.
 Taking $F,F', H$ as in the previous subsections, we get
\begin{align*}
    \|F-F'\|_{\ell^1} = \|H\|_{\ell^1} = \lil p(p-1) \approx  M.
\end{align*}
Meanwhile, 
\begin{align}\label{b_H_2}
\| b_F - b_{F'} \|_{A_1} &\leq \frac{1}{2} \|b_H - b_H(1-\eta)^2 z^{-p} \|_{A_1} + \|a_H ^* - 1\|_{A_1} \, .
\end{align}
By Proposition \ref{prop:main construction}, the last term in \eqref{b_H_2} may be estimated as 
\begin{equation}\label{eq:bd_a_1}
\|a_H^*-1\|_{A_1} \lesssim \epsilon^2 p^3  e^{\epsilon p^2} \leq \delta,
\end{equation}
while for the penultimate one, we obtain by the triangle inequality 
\begin{align}\label{eq:comp_shifts}
\|b_H-b_H (1-\eta)^2 z^{-p}\|_{A_1}& \leq \|b_H-b_H z^{-p}\|_{A_1}+ 2\eta \|b_H\|_{A_1} \nonumber\\
&\leq \|\widehat{H} - z^{-p} \widehat{H}\|_{A_1} +2\|b_H -\widehat{H}\|_{A_1} + 2 \eta \|\widehat{H}\|_{A_1} \, .
\end{align}
We can bound the last term in \eqref{eq:comp_shifts} directly from the definition of $H$, and the before-last one, using Proposition \ref{prop:main construction}. As for the first term in \eqref{eq:comp_shifts}, we note that it is the norm of a difference of 
$\widehat{H}$ with its Fourier shift $z^{-p} \widehat{H}$. Because $H$ is $p$-periodic with support of size $p^2$ and height $\lil$, then this difference has Fourier support on at most $2p$ frequencies, and so the $A_1$ norm is at most $2 \lil p $. Combining these three estimates together yields  
\begin{align}
 \|b_H-b_H (1-\eta)^2 z^{-p}\|_{A_1}&\lesssim \varepsilon p + \varepsilon^3 p^4 + \varepsilon^2 p^3 e^{\varepsilon p^2} + \eta\varepsilon p^2 \nonumber \\
&\leq \delta M^{-1}e^{-M} + \delta^2 M^{-1} e^{-2M} +\delta + \eta M \nonumber\\
&\leq 2\delta + \delta^{\frac{1}{4}} M \label{eq:bd_b_shift}\, .
\end{align}
Plugging the bounds \eqref{eq:bd_a_1} and \eqref{eq:bd_b_shift} back into \eqref{b_H_2}, we get
\begin{align*}
\| b_F - b_{F'} \|_{A_1}  \leq C (\delta + \delta^{\frac{1}{4}} M),
\end{align*}
for some absolute constant $C$. By taking $M$ large and then taking $\delta$ sufficiently small, we obtain \eqref{eq:Lip_cter_ex_1}

We are left with verifying outerness of $a_{F}$ and $a_{F'}$. Since both are Laurent polynomials, it suffices to verify neither has zeros in $\D^*$. This is obvious for $a_{F'}$. To ensure outerness of $a_F$, we use Lemma \ref{lem:outerness inequality} and obtain
\begin{align*}
a_F (z) - \delta^{\frac{1}{4}} &\gtrsim - (\epsilon p^{\frac{3}{2}} + \epsilon^3 p^4 + \epsilon^2 p^3 e^{\epsilon p^2} )\\
&\geq - \left( \delta^{\frac{1}{2}} e^{-\frac{M}{2}} +\delta^{2} M^{-1}e^{-2M} + \delta  \right) \gtrsim -\delta^{\frac{1}{2}} \, .
\end{align*}

Making $\delta$ small enough completes the proof. \hfill\qed

\subsection{Proof of Theorem \ref{prop:counterexamples a b F l1 A1}}
To prove Theorem \ref{prop:counterexamples a b F l1 A1}, we take $F$, $F'$ and $H$ as above with parameters $\epsilon (M, \delta), \;\eta(M, \delta),\; p(M, \delta),$ all depending on $M \geq 1$ and $\delta \in (0, 1)$ which will be chosen later. We focus first on the first half of Theorem \ref{prop:counterexamples a b F l1 A1} involving \eqref{eq:b F counterexample}. The function $a_F$ is outer by the same reasoning as for the proof of Theorem \ref{prop:lipschitz l1 A1} provided that $\delta$ is small enough. 

For the norm on the right-hand side of \eqref{eq:b F counterexample}, we have $\|F\|_{\ell^1 }  \approx M $. As for the left-hand side, we have
\[
\|b_F\|_{A_1} \leq \|b_F - b_{F'} \|_{A_1} + \|b_{F'}\|_{A_1} \lesssim \delta + \delta^{\frac 1 4} M + (1-\eta) \lesssim 1  + \delta^{\frac 1 4} M \, .
\]
Taking $M$ large and then $\delta$ small yields \eqref{eq:b F counterexample}.

For the second half of Theorem \ref{prop:counterexamples a b F l1 A1}, we consider $H$. Recall that $\|H\|_{\ell^1} \approx M$ and by Proposition \ref{prop:main construction}, $\|a_H -1 \|_{A_1} \lesssim \delta$. Taking $M$ sufficiently large yields \eqref{eq:a F counterexample}.
As for verifying that $a_H$ is outer, for any $z \in \D^*$, we estimate using the maximum principle and then  \eqref{eq:bd_a_1}
\begin{align*}
    1-|a_H(z)|  \leq \|a_H-1\|_{L^{\infty}} \leq \|a_H-1\|_{A_1} \lesssim\delta,
\end{align*}
whence, provided that $\delta$ is small enough, $a_H$ has no zeros in $\D^*$ and must therefore be outer. This completes the proof of Theorem \ref{prop:counterexamples a b F l1 A1}. \hfill\qed

\subsection{Proof of Theorem \ref{prop:lipschitz l2}}

Here we choose large prime $p$ and put $\epsilon=p^{-2}$. Then,
\begin{align*}
    \|b_F - b_{F'}\|_{L^2} &\leq \|a_H^*-1\|_{L^2} + \| (1-(1-\eta)^2 z^{-p} ) b_H \|_{L^2}\\
&\leq \|a_H^*-1\|_{A_1} + \| b_H-\widehat{H}\|_{A_1} + \| (1-(1-\eta)^2z^{-p} ) \widehat{H} \|_{L^2}\\
&\leq \|a_H^*-1\|_{A_1} + \| b_H-\widehat{H}\|_{A_1} + \| \widehat{H}-z^{-p} \widehat{H} \|_{L^2} + \eta \|\hat{H}\|_{L^2}.
\end{align*}
By Proposition \ref{prop:main construction} and Lemma \ref{lem_char}, we estimate this as 
$$
\lesssim \epsilon^2 p^3 e^{\epsilon p^2} + \epsilon^3 p^4 + \epsilon p^{\frac{1}{2}} + \eta \epsilon p^{\frac{3}{2}}  \lesssim \frac{1}{p} + \eta \frac{1}{\sqrt{p}} \lesssim \frac{1}{\sqrt{p}} \, .
$$
All in all, we get
$$
\|b_F-b_{F'}\|_{L^2} \lesssim \frac{1}{\sqrt{p}} \, .
$$
To ensure outerness of $a$, we need by Lemma \ref{lem:outerness inequality}, 
$$
\frac{1}{\sqrt{p}} < c \eta \, 
$$
with some large absolute constant $c$. So, we fix $\eta$ and choose $p$ large enough to have the last inequality. \hfill\qed

\section{Positive results}
In this section, we first prove Theorem \ref{prop:positive}, then we present the general scheme of the proof of Baxter's theorem, explain Baxter's trick, and apply it to prove Theorem \ref{prop:positive with a}.

\subsection{Proof of Theorem \ref{prop:positive}}
We deduce the theorem from Theorem \ref{oldthm:quant sqrt 2 baxter} by a standard Wiener algebra norm estimate for the inverse. The number $1/\sqrt{3}<1/\sqrt{2}$ appears naturally from the sharp range of norm control for the inverse. We refer to \cite{ElFallahNikolskiZarrabi1998} for such norm control estimates and discussion.

Below we will omit the subscript of $a_F$ and just write $a$. We want to estimate $\|\log |a|\|_{A_1}$. Let us write
$$
|a|^2 = \widehat{|a|^2}(0)\left( 1 + \frac{|a|^2-\widehat{|a|^2}(0)}{\widehat{|a|^2}(0)} \right) \, .
$$
Denote $h:= \frac{|a|^2-\widehat{|a|^2}(0)}{\widehat{|a|^2}(0)}$. We can estimate
$$
\| |a|^2-\widehat{|a|^2}(0)\|_{A_1} = \| |a|^2 \|_{A_1} - \widehat{|a|^2}(0) < 1 + \| |b|^2 \|_{A_1} - \widehat{|a|^2}(0) \leq \frac{4}{3} -\widehat{|a|^2}(0)\, .
$$
Hence, to ensure $\|h\|_{A_1}<1$ it is sufficient to have
$$
\frac{\frac{4}{3}-\widehat{|a|^2}(0)}{\widehat{|a|^2}(0)} < 1 \, ,
$$
which is equivalent to
$$
\widehat{|a|^2}(0) > \frac{2}{3} \, .
$$
The last inequality is true as
$$\widehat{|a|^2}(0) = \int_\T |a|^2 = \int_\T 1-|b|^2 > 1-\frac{1}{3} + \varepsilon = \frac{2}{3} + \varepsilon \, .$$
We can write
$$
\log |a|^2 = \log \widehat{|a|^2}(0) +\log (1+h) = \log \widehat{|a|^2}(0)  + \sum_{k=1}^\infty (-1)^{k-1} \frac{h^k}{k}\, .
$$
Taking the norms, we estimate
$$
\|\log|a|^2\|_{A_1} \leq |\log \widehat{|a|^2}(0)| + \sum_{k=1}^\infty \frac{\|h\|_{A_1}^k}{k} \leq |\log \widehat{|a|^2}(0)| - \log (1-\|h\|_{A_1})
$$
$$
\leq -\log \widehat{|a|^2}(0) - \log \left( 1 - \frac{\frac{4}{3}-\widehat{|a|^2}(0)}{\widehat{|a|^2}(0)} \right) = -\log \left( 2 \widehat{|a|^2}(0) - \frac{4}{3} \right) \leq -\log \varepsilon \, .
$$
Finally, as $a^{-1}$ is also outer, we can write 
$$
\|a^{-1}\|_{A_1} = \| \exp (-\log |a| - i H(\log |a|) ) \|_{A_1} \, ,
$$
where $H$ is the circular Hilbert transform. Using the fact that $H$ has operator norm $\leq 1$ on $A_1$, we estimate this last term by 
$$\leq \exp (2\|\log |a|\|_{A_1}) \leq \exp (-\log \varepsilon) = 1/\varepsilon \, ,
$$
and
$$\|b/a^*\|_{A_1}\leq \|a^{-1}\|_{A_1} \|b\|_{A_1}\leq \varepsilon^{-1} \|b\|_{A_1}\, .$$
\hfill\qed

\subsection{Preparing to prove Theorem \ref{prop:positive with a}: Baxter's trick}

Let us briefly describe the proof of Baxter's theorem and introduce the necessary notation. For a sequence $F\in \ell^2(\Z)$, let $(a_n,b_n)$ be the NLFT of the truncated sequence $F\Zchi_{(-\infty,n]}$ and let
$$
A_n := a_n/a_n^*(0)\, , \quad B_n := b_n/a_n^*(0)
$$
be renormalized versions of $a_n$ an $b_n$. Similarly, put $A:= a_F/a^*_F(0)$ and $B:= b_F/a^*_F(0)$. Clearly, the definition \eqref{def:NLFT} of the NLFT implies a matrix recursive relation for $a_n$ and $b_n$. Renormalized for $A_n,B_n$ it takes the following form:
\begin{equation}\label{eq:normalized recursion}
    \begin{cases} A_{n+1} = A_n - \bar F_{n+1} z^{-n-1} B_n \\
    B_{n+1} = B_n + F_{n+1}z^{n+1}A_n \, .
    \end{cases}
\end{equation}
The proof of the difficult part of Baxter's theorem consists of two parts. The first part is to show that $A_n\to A$ and $B_n\to B$ in the Wiener algebra $A_1$ as $n\to \infty$. The second part is the so-called Baxter's trick. We utilize the second equation of \eqref{eq:normalized recursion} to obtain a telescoping sum. Multiplying both sides by $A^{-1}$, rewrite
$$
-B_n/A = - B_{n+1}/A + F_{n+1} z^{n+1} A_n/A.
$$
As $A$ has Fourier support on nonpositive frequencies, the left-hand side above has Fourier support on $(-\infty,n]$, whereas $-B_{n+1}/A$ and $F_{n+1} z^{n+1} A_n/A$ have Fourier supports on $(-\infty, n+1]$ with coefficient of the latter at frequency $n+1$ being $F_{n+1} A_n^*(0)/A^*(0) = F_{n+1}$. Thus, the $(n+1)$th frequencies of the two terms should cancel. Taking the $A_1$ norms and a triangle inequality, this implies
\begin{equation}\label{eq:baxtertrick telescoping}
    \|B_n/A\|_{A_1} \leq \|B_{n+1}/A\|_{A_1} + |F_{n+1}| \|A_{n}/A\|_{A_1} - 2|F_{n+1}| \, .
\end{equation}
Note that under the assumption of Theorem \ref{oldthm:su2baxter} it is not difficult to see by some Banach algebra properties that $A^{-1}\in A_1$ \cite[Lemma 2]{SU2Baxter}, so the norms in the above display are finite. Furthermore, as $A_n\to A$ by the first part, we have $A_n/A \to 1$ in $A_1$ as $n\to \infty$. This implies that there exists $n_0$ such that for all $n\geq n_0$, we have $\|A_n/A\|_{A_1} < 1.5$, hence adding \eqref{eq:baxtertrick telescoping} for $n=n_0,n_0+1,\dots, N$ we get
$$
\|B_{n_0}/A\|_{A_1} \leq \|B_{N+1}/A\|_{A_1} + \sum_{n=n_0}^N |F_{n+1}| (\|A_n/A\|_{A_1}-2) \, .
$$
Taking the limit as $N\to\infty$, 
$$
0.5 \sum_{n=n_0}^{\infty} |F_{n+1}| \leq \|B/A\|_{A_1} = \|b_F/a_F\|_{A_1} \, .
$$
Applying the same argument on the reflected potential gives the same estimate from the left.

The problem in the above argument is the lack of control on $n_0$ and on the rate of convergence of $a_n$ and $b_n$ to $a$ and $b$.

\subsection{Proof of Theorem \ref{prop:positive with a}}

Let us rewrite the first equation of \eqref{eq:normalized recursion} as
$$
A_n = A_{n+1} + \bar F_{n+1} B_n z^{-n-1} \, ,
$$
and apply the same telescoping Baxter's trick for $n=0,\dots, N$. We get
$$
\|A_0\|_{A_1} \leq \|A_{N+1}\|_{A_1} + \sum_{n=0}^N |F_n| (\|B_n\|_{A_1}-2) \, .
$$
If for some $\epsilon>0$ and all $n\in \Z$, we have $\|B_n\|_{A_1}<2-\epsilon$, then
$$
\sum_{n=0}^\infty |F_n| \leq \frac{1}{\epsilon} (\|A\|_{A_1}-1) = \frac{1}{\epsilon} (\frac{\|a\|_{A_1}}{a^* (0)} - 1) \, .
$$
Applying the same reasoning to the pair $(a^*(z^{-1}), b(z^{-1}))$, which is the NLFT of $(F_{-n} \Zchi_{\{n \leq 0\}} )_{n \in \Z}$ by \cite[(3.11)]{QSP_NLFA}, and using the fact that $a$ being outer implies $a^* (z^{-1})$ is as well with the same $A_1$ norm and value at the origin, we obtain a similar estimate on $\sum\limits_{n=-\infty}^{0} |F_n|$ and hence \eqref{eq:l1_baxter_trick} holds. Let us now show that the hypothesis of the theorem implies $\|B_n\|_{A_1}<2-\epsilon$. By the Riemann-Hilbert equations \cite[(2.12)]{SU2Baxter}, we have
$$
B_n = \frac{1}{a_n^*(0)} b_n = \frac{a^*(0)}{a_n^*(0)} P_{\leq n } \left( B \frac{A_n^*}{A^*} \right) \, ,
$$
where $P_{\leq n}$ is the Fourier projection operator on frequencies $\leq n$.
Then, 
$$
\|B_n\|_{A_1} \leq \frac{a^*(0)}{a_n^*(0)} \| B\|_{A_1} \left\| \frac{A_n^*}{A^*} \right\|_{A_1} \leq \| B\|_{A_1} \left\| \frac{A_n^*}{A^*} \right\|_{A_1}\, ,
$$
where we used that $a^* (0)/a_n ^*(0) \leq 1$, which follows from the fact that $a^* (0) = a_n ^* (0) a_{>n}^* (0)$, where $a_{>n} ^*$ comes from the potential $F \mathbf{1}_{(n, \infty)}$ (see \cite[(7.3)]{QSP_NLFA}).
By \cite[Lem. 6]{SU2Baxter}, as $\|b\|_{A_1}<1$, then $\|A_n/A\|_{A_1} < 1/(1-\|b\|_{A_1}^2)$. Thus,
$$
\|B_n\|_{A_1} \leq \frac{\|B\|_{A_1} }{1-\|b\|_{A_1}^2} = \frac{\|B\|_{A_1} }{1- a^*(0)^2 \|B\|_{A_1}^2} = \frac{\|b\|_{A_1}}{ a^* (0) (1- \|b\|_{A_1 ^2}) } \, ,
$$
which is at most $2- \epsilon$ precisely by assumption. This completes the proof of Theorem \ref{prop:positive with a}. \qed

While we do not flesh out the details in this manuscript, one can also apply a similar argument with the second equation of \eqref{eq:normalized recursion} to obtain an estimate which holds whenever $\|A_n\|_{A_1} < 2- \lil$ instead of $\|B_n\|_{A_1} < 2-\lil$.

\section{Obstructions for further improvements} 

Perhaps the most interesting open question motivated by the results of this paper is Question \ref{quest_theshold}, namely, what is the largest $\tau>0$ with the following property: for any $\epsilon>0$, for any potential $F\in \ell^2$ with $a_F$ outer in $\mathbb D^*$, the inequality $\|b_F\|_{A_1}<\tau-\epsilon$ implies $\|F\|_{\ell^1}\lesssim_{\varepsilon}1$. Theorems \ref{prop:counterexamples a b F l1 A1} and \ref{prop:positive} together imply that $$1/\sqrt{3}\leq \tau\leq 1 \, .$$

One can ask whether the upper bound $1$ above can be improved by an appropriate modification of our construction. Let us first recall how the upper bound $1$ was achieved. We set $F=F'+H$, where $\|H\|_{\ell^1}\approx M$ and $\|H\|_{\ell^{\infty}}=\varepsilon$ with $M$ being arbitrarily large but fixed while $\varepsilon$ small and eventually tending to zero. Using \eqref{eq:prod F' and H} we wrote
\begin{equation}\label{eq:b_F in 4 terms}
    b_F = b_{F'} + a_{F'}\cdot \widehat{H} + a_{F'} (b_H-\widehat{H}) + b_{F'} (a^*_H-1),
\end{equation}
and then showed that the $A_1$ norms of the last three terms were $o(1)$ as $\varepsilon\to 0$ under our choices of parameters. Namely, the last two terms were $o(1)$ due to the inner cancellations inside $H$ by Proposition \ref{prop:counterexamples a b F l1 A1} and $\|a_{F'}\cdot \widehat{H}\|_{A_1}=o(1)$ because of the cancellations coming from $p$-periodicity of $H$ and the choice of $F'$. More precisely, from the multilinear expansion point of view,
\begin{align*}
    a_{F'}\cdot \widehat{H}=\sum_{n=0}^{\infty}(-1)^n\sum_{j_1<...<j_{2n}<0\leq j_{2n+1}} z^{\sum_{k=1}^{2n+1}(-1)^{k+1}j_k}F'_{j_1}...{F'_{j_{2n}}}H_{j_{2n+1}},
\end{align*}
and we had $F'_{-p-1}F'_{-1}-1=o(1)$, which ensured
\begin{align*}
    \sum_{j_1<j_{2}<0\leq j_3}(-1)^{\frac{3-1}{2}}F_{j_1}F_{j_2}F_{j_3}+\sum_{0\leq j_1}(-1)^{\frac{1-1}{2}}F_{j_1}=o(1),
\end{align*}
and thus, the fact that the support of $F'$ belongs to some residue class $\mod p$ together with $p$-periodicity of $H$ yielded $\|a_{F'}\cdot \widehat{H}\|_{A_1}=o(1)$. Finally, we had $\|b_{F'}\|_{A_1} = 1 + o(1)$ by the choice of $F'$.

A natural attempt to generalize this construction and make it more flexible in order to get a better bound for $\|b_F\|_{A_1}$ would be to enrich the ``additional" potential $F'$. More specifically, the question is: can we keep $H$ with the parameters $M,\varepsilon$ as above and add to it another potential $F'$ that is allowed to be supported on $p\Z_{\leq 0}-1$  with $\|F'\|_{\ell^1}$ bounded as $\varepsilon\to 0$ in such a way that $\|b_F-b_{F'}\|_{A_1}=o(1)$ as $\varepsilon\to 0$ and $\|b_{F'}\|_{A_1}<1-\beta$ for some $\beta>0$ independent of $\varepsilon$? Unfortunately, the answer to this question is negative. 

Indeed, assume that we succeeded in constructing such a potential. Then, as before, we have the last two terms in \eqref{eq:b_F in 4 terms} being $o_{\|F'\|_{\ell^1}}(1)$ due to the estimate
$$\|a_{F'} (b_H-\widehat{H})\|_{A_1} \leq \|a_{F'}\|_{A_1} \|b_H-\widehat{H}\|_{A_1} \leq e^{\|F\|_{\ell^1}} \|b_H-\widehat{H}\|_{A_1}$$
where in the last step we used \eqref{eq:norm bd a+b F}. A similar estimate applies for the second term. Next, as $a_{F'}$ has nonzero Fourier coefficients only at $p\Z$, while $\sum_{m=0}^{p-1}H_mz^m$ is supported in $[0,p-1]\cap \Z$, we obtain
\begin{align*}
\| a_{F'} \cdot \widehat{H}\|_{A_1} &= \Big\|\Big(\sum_{m=0}^{p-1}H_mz^m\Big)\Big\|_{A_1}
\Big\|\Big(\sum_{m=0}^{p-1}z^{pm}\Big)a_{F'}(z)\Big\|_{A_1}\\
&\geq (p-1) \varepsilon \Big\|\Big(\sum_{m=0}^{p-1}z^{pm}\Big)a_{F'}(z)\Big\|_{A_1}\\
&\geq p(p-1) \varepsilon |a_{F'}(1)| =\|H\|_{\ell^1} |a_{F'}(1)|.
\end{align*}
Hence, in order to get $\|b_{F}-b_{F'}\|=o(1)$, we need that
\begin{align}\label{eq:a'}
    |a_{F'}(1)|\leq\frac{\| a_{F'} \cdot \widehat{H}\|_{A_1}}{\|H\|_{\ell^1}}=o(1),
\end{align}
which in turn implies $\|b_{F'}\|_{A_1}\geq \|b_{F'}\|_{L^\infty}\geq \sqrt{1-|a_{F'}(1)|^2}\geq 1-o(1)$, which means that $\|b_{F'}\|_{A_1}$ cannot be bounded away from $1$ from below.

Importantly, this argument also shows why such a construction leads to a dead end for the $SU(1,1)$ case. Indeed, we can repeat the same steps for the $SU(1,1)$ case and arrive again at \eqref{eq:a'}. However, for the $SU(1,1)$ case, \eqref{eq:a'} cannot hold if $\varepsilon$ is sufficiently small, since $|a_{F'}(z)|\geq 1$ on $\T$. 

Similarly, if we write the analogous condition that guarantees boundedness of $\|a_{F}\|_{A_1}$ with $\|F\|_{\ell^1}$ being arbitrarily large, we will arrive at $b_{F'}(1)=o(1).$ Hence, having both $\|a_F\|_{A_1}$ and $\|b_F\|_{A_1}$ bounded while $\|F\|_{\ell^1}$ tends to infinity (cf. Question \ref{question:SU2_aplusb}) would imply $|a_{F'}(1)|+|b_{F'}(1)|=o(1)$, which is impossible for $\varepsilon$ small enough both in $SU(2)$ and $SU(1,1)$ settings.

Nevertheless, to give some ground for optimism, we note that in the argument above we were heavily leaning on the triangle inequality $\|b_{F}\|_{A_1}\leq \|b_{F}-b_{F'}\|_{A_1}+\|b_{F'}\|_{A_1}$, which in some cases may destroy some hidden cancellation.

\bibliographystyle{amsalpha}

\bibliography{references}

\end{document}